\theoremstyle{plain} 
\newtheorem{theorem}{Theorem}
\theoremstyle{proposition}
\newtheorem{proposition}{Proposition}[section]
\newcommand {\be}{\begin{equation}} 
\newcommand {\ee}{\end{equation}}
\newcommand{\tr}{\mathrm{tr}}
\newcommand{\divg}{\mathrm{div}\;}
\newcommand{\Vnorm}[1]{||#1||_{\mathcal{V}}}
\newcommand{\intO}[1]{\int_{\Omega}#1 \;dx}
\newcommand{\intG}[2]{\int_{\Gamma_#1}#2 \;ds}
\newcommand{\sumOe}{\sum_{\Omega_e \in \mathcal{T}_h}}
\newcommand{\sumGiD}{\sum_{E\in\Gamma_{iD}}}
\newcommand{\sumintOe}[1]{\sum_{\Omega_e \in \mathcal{T}_h}\int_{\Omega_e}#1 \;dx}
\newcommand{\sumintGiD}[1]{\sum_{E\in\Gamma_{iD}}\frac{1}{h_E}\int_{E}#1 \;ds}
\newcommand{\sumintG}[2]{\sum_{E\in\Gamma_#1}\dfrac{1}{h_E}\int_E#2 \;ds}
\newcommand{\intOe}{\int_{\Omega_e}}
\newcommand{\DGnorm}[1]{||#1||_{DG}}
\begin{document}
\begin{frontmatter}
\title{Discontinuous Galerkin approximations for near-incompressible and near-inextensible transversely isotropic bodies}
\author[mainaddress]{B.J. Grieshaber}
\ead{beverley.grieshaber@uct.ac.za}

\author[mainaddress]{F. Rasolofoson\corref{mycorrespondingauthor}}
\cortext[mycorrespondingauthor]{Corresponding author}
\ead{rslfar002@myuct.ac.za}

\author[mainaddress]{B.D. Reddy}
\ead{daya.reddy@uct.ac.za}

\address[mainaddress]{Department of Mathematics and Applied Mathematics \\
and Centre for Research in Computational and Applied Mechanics\\
University of Cape Town, South Africa}

\begin{abstract}
This work studies discontinuous Galerkin (DG) approximations of the boundary value problem for homogeneous transversely isotropic linear elastic bodies. 
Low-order approximations on triangles are adopted, with the use of three interior penalty DG methods, viz. nonsymmetric, symmetric and incomplete. 
It is known that these methods are uniformly convergent in the incompressible limit for the isotropic case. 
This work focuses on behaviour in the inextensible limit for transverse isotropy. 
An error estimate suggests the possibility of extensional locking, a feature that is confirmed by numerical experiments. 
Under-integration of the extensional edge terms is proposed as a remedy. 
This modification is shown to lead to an error estimate that is consistent with locking-free behaviour. Numerical tests confirm the uniformly convergent behaviour, at an optimal rate, of the under-integrated scheme. 
\end{abstract}

\begin{keyword}
discontinuous Galerkin methods, elasticity, nearly incompressible, nearly inextensible, transverse isotropy, interior penalty
\end{keyword}

\end{frontmatter}
\section{Introduction}
Anisotropic materials have a wide range of applications, e.g. in the geological domain, or in biomechanical systems such as the myocardium, brain stem, ligaments, and tendons \cite{Exadaktylos2001,Royer2011,shahi2013}.
Different types of anisotropy along with their mechanical restrictions are presented in \cite{Lubarda-Chen2008,Ting1996,Zubov2016}.
In this work, we are particularly interested in transversely isotropic materials, which play a central role in theories describing the behaviour of fibre-reinforced composite materials \cite{Hayes-Horgan1974,Hayes-Horgan1975,Pipkin1979}.

When elastic materials are internally constrained, the associated displacement-based finite element approximations exhibit poor performance in the form of poor coarse-mesh approximations, as well as locking, in which they do not converge uniformly with respect to the constraint parameters. 
The incompressibility constraint has been widely studied in this context (see, for example, \cite{Brezzi-Fortin1991}), while  behaviour in the inextensible limit has received less attention. 

It has been shown that locking can be avoided using high-order elements \cite{Babuska1992}, though low-order approximations remain of high interest. 
Low-order discontinuous Galerkin approximations with linear approximations on triangles are uniformly convergent in the incompressible limit \cite{Wihler}. 
The corresponding problem using bi- or trilinear approximations on quadrilaterals and hexahedra displays locking, which may be overcome by selective under-integration of edge terms involving the relevant Lam\'e parameter \cite{Beverley}.
 
A range of mixed methods have been shown to be uniformly convergent for near-incompressibility (see for example 
\cite{Boffi-Brezzi-Fortin2008} and the references therein), while the works \cite{DLRW2006,LRW2006} provide a unified treatment of convergent approaches using two- or three-field approximations. 
Near-inextensibility is studied computationally in the work \cite{Auricchio-Scalet-Wriggers2017}, using Lagrange multiplier and perturbed Lagrangian approaches.
There have also been a number of computational investigations of transversely isotropic and inextensible behaviour for large-displacement problems
\cite{Wriggers-Schroeder-Auricchio2016,Zdunek-Rachowicz-Eriksson2016,Zdunek-Rachowicz2017a,Zdunek-Rachowicz2017b}.

Theoretical studies of anisotropic elastic behaviour include the work \cite{Arnold-Falk1987}, in which conditions for well-posedness are established for a Hellinger-Reissner formulation, and \cite{Hayes-Horgan1974,Hayes-Horgan1975}, in which conditions for uniqueness and stability are established.
 
In recent work \cite{RGR}, the authors have studied the well-posedness of boundary value problems involving transversely isotropic elastic materials. 
They have also investigated theoretically and computationally the use of conforming finite element approximations, paying attention to both near-incompressibility and near-inextensibility. 
It is found in that work that, for low-order quadrilaterals, selective under-integration of volumetric and extensional terms serves to render the schemes locking-free. 
Further related work has recently been reported in \cite{Reddy-vanHuyssteen2018}, on a virtual element formulation for transverse isotropy: this formulation is shown to be robust and locking-free in the inextensional limit, for both constant and variable fibre directions.

The subject of this work is a study of DG approximations for transversely isotropic linear elasticity. 
The focus of the work is on three interior penalty methods: symmetric, nonsymmetric, and incomplete. 
We draw on earlier work on DG formulations for elliptic problems in \cite{Arnold2002} and elasticity in \cite{Beverley,Hansbo2002,Hansbo2003,Wihler}, in addressing the problem of developing discrete formulations that are uniformly convergent in the incompressible and inextensible limits. 

With the use of low-order triangles, the problem for isotropic elasticity is uniformly convergent in the incompressible limit \cite{Hansbo2002,Wihler}. 
For bi- or trilinear approximations on quadrilaterals and hexahedra, however, it is known \cite{Beverley} that uniform convergence requires the use of under-integration of the edge terms in the formulation. 
Both of the corresponding error estimates rely ultimately on an a priori estimate presented in \cite{Brenner1992}. 
For the problem studied here, it is shown numerically that the DG methods in their original formulation result in locking behaviour in the inextensional limit, a problem that is resolved by underintegrating the relevant edge terms. 
There does not exist an a priori estimate for transverse isotropy analogous to that in \cite{Brenner1992} for the isotropic problem, but the error estimate for the formulation with under-integration has a structure similar to that of the isotropic a priori estimate, and therefore is consistent with the uniformly convergent behaviour that is observed numerically. 

The outline of the rest of this work is as follows. 
In Sections \ref{Sec:TI} and \ref{Sec:Governing_eq}, we present the governing equations and weak formulation for problems of transversely isotropic linear elasticity, with a summary of conditions for well-posedness. 
The DG formulations are introduced in Section \ref{Sec:DGFEM},  their well-posedness established, and an a priori error bound derived. 
The likelihood of extensional locking is deduced from the error estimate, and an alternative formulation, based on selective under-integration, is introduced and analyzed in Section \ref{Sec:UI}. 
The similarity of the resulting error bound to that for isotropic elasticity suggests the locking-free behaviour of this formulation. 
Such behaviour is confirmed in Section \ref{Sec:Numerical}, in which numerical results are presented for two model problems.   
The work concludes with a summary of results and a discussion of possible future work.

\section{Transversely isotropic materials}\label{Sec:TI}
For a transversely isotropic linearly elastic material with fibre direction given by the unit vector $\bm{a}$, the elasticity tensor is given by \cite{Lubarda-Chen2008,Spencer1982}
\be\label{tangent_moduli}
\mathbb{C} = \lambda \bm{I}\otimes\bm{I} + 2\mu_t \mathbb{I} + \beta \bm{M}\otimes\bm{M} + \alpha (\bm{I}\otimes\bm{M}+\bm{M}\otimes\bm{I}) + \gamma \mathbb{M}\,.
\ee
Here $\bm{I}$ is the second-order identity tensor, $\mathbb{I}$ is the fourth-order identity tensor, $\bm{M} = \bm{a}\otimes \bm{a}$, and $\mathbb{M}$ is the fourth-order tensor defined by
\be
\mathbb{M}\bm{R} =   \bm{M}\bm{R} + \bm{R}\bm{M}\qquad\mbox{for any second-order tensor}\ \bm{R}\,. 
\label{mathcalM}
\ee
$\lambda$ denotes the first Lam\'e parameter, the shear modulus in the plane of isotropy is $\mu_t$, $\mu_l$ is the shear modulus along the fibre direction, and 
\be
\gamma = 2(\mu_l - \mu_t).
\label{gamma}
\ee
The further material constants $\alpha$ and $\beta$ do not have a direct interpretation, though it will be seen that $\beta \rightarrow \infty$ in the inextensible limit.

The corresponding linear stress-strain relation for small deformations is then 
\begin{align}
	\bm\sigma &= \mathbb{C}\bm{\varepsilon}\nonumber\\
			  &= \lambda(\tr\bm\varepsilon)\bm{I} + 2\mu_t \bm\varepsilon + \beta (\bm{M}:\bm\varepsilon)\bm{M} + \alpha ((\bm{M}:\bm\varepsilon)\bm{I} + (\tr\bm\varepsilon)\bm{M}) + \gamma (\bm\varepsilon\bm{M} + \bm{M}\bm\varepsilon),\label{cauchy_tensor}
\end{align}
in which $\bm\sigma$ and $\bm\varepsilon$ denote the stress and the infinitesimal strain tensors; $\mbox{tr}\,\bm{\varepsilon}$ denotes the trace of $\bm{\varepsilon}$, and $\bm{M}:\bm{\varepsilon} = \bm{\varepsilon}\bm{a}\cdot\bm{a}$, obtained from the definition of $\bm{M}$, gives the strain in the direction of $\bm{a}.$
The special case of an isotropic material is recovered by setting $\alpha = \beta = 0$ and $\mu_l = \mu_t$.

We can write the expressions of these five material parameters in terms of five physically meaningful constants, viz.
$E_t$: Young's modulus in the transverse direction; $E_l$: Young's modulus in the fibre direction; and
$\nu_t$ and $\nu_l$: Poisson's ratios for the transverse strain with respect to the fibre direction and the plane normal to it, respectively.
The remaining constants are the two shear moduli $\mu_t$ and $\mu_l$, and one may further define  $\mu_t$ by 
\be
\mu_t = \dfrac{E_t}{2(1+\nu_t)}.
\label{GT}
\ee
Henceforth, we set
\be\label{cpq}
E_l = p E_t \hspace{0.3cm}\text{ and } \hspace{0.3cm} \mu_l = q \mu_t\,.
\ee
Thus $p$ measures the stiffness in the fibre direction relative to that in the plane of isotropy.
We then have
\begin{subequations}\label{material_parameters}
\begin{align}
\mu_l &= \dfrac{qE_t}{2(1+\nu_t)},\\
\lambda &= \dfrac{(\nu_tp+\nu_l^2)}{(1+\nu_t)((1-\nu_t)p-2\nu_l^2)}E_t,\\[10pt]
\alpha &= \dfrac{(\nu_l-\nu_t+\nu_t\nu_l)p-\nu_l^2}{((1+\nu_t)((1-\nu_t)p-2\nu_l^2)}E_t,\\[10pt]
\beta &= \dfrac{(1-\nu_t^2)p^2 + (-2\nu_t\nu_l+2q\nu_t-2\nu_l+1-2q)p - (1-4q)\nu_l^2}{(1+\nu_t)((1-\nu_t)p-2\nu_l^2)}E_t.\label{beta}
\end{align}
\end{subequations}
Later, we will consider the special case in which
\be\label{eq_nu_mu}
\nu_l=\nu_t=\nu \quad \text{and} \quad \mu_l = \mu_t, \quad \text{that is} \quad q=1.
\ee
In this way, we will focus on behaviour in relation to three independent parameters, viz. $(\lambda, \alpha, \beta)$ or $(E_t, E_l, \nu)$, rather than the full set of five parameters.
The expressions given by equations \eqref{material_parameters} then become
\begin{subequations}\label{material_parameters_SC}
\begin{align}
\dfrac{\mu_l}{E_t} &= \dfrac{1}{2(1+\nu)},\\
\dfrac{\lambda}{E_t} &= \dfrac{\nu (p+\nu)}{(1+\nu)\big((1-\nu)p-2\nu^2\big)},\\[10pt]
\dfrac{\alpha}{E_t} &= \dfrac{\nu^2(p-1)}{(1+\nu)\big((1-\nu)p-2\nu^2\big)},\\[10pt]
\dfrac{\beta}{E_t} &= \dfrac{(p-1)\big((1-\nu^2)p - 3\nu^2\big)}{(1+\nu)\big((1-\nu)p-2\nu^2\big)}.
\end{align}
\end{subequations}
Necessary and sufficient conditions on the material constants for the elasticity tensor to be pointwise stable are that the subdeterminants of $\mathbb{C}$ when expressed in matrix form be positive (\cite{Noble1969}, Theorem 12.6) This leads to the conditions
\begin{subequations}\label{PS_conds}
\begin{align}
&\mu_t>0,\, \mu_l>0,\label{l_block}\\
&\lambda+\mu_t>0,\label{det_2}\\
\text{and }\,& (\lambda+\mu_t)(\lambda+\mu_t+2\alpha+\beta+2\gamma) - (\lambda+\alpha)^2>0.\label{det_3}
\end{align}
\end{subequations}
Using the expressions in \eqref{material_parameters}, these conditions can be rewritten in terms of the engineering constants as follows (see also \cite{Exadaktylos2001,Lai2009}):
\begin{subequations}\label{ps_conds_EC}
\begin{align}
& E_t>0,\,\mu_t>0,\,\mu_l>0,\label{psc_1}\\
& p>\nu_l^2,\label{psc_2}\\
& (1-\nu_t)p-2\nu_l^2>0.\label{psc_3}
\end{align}
\end{subequations}
\section{Governing equations and weak formulation}\label{Sec:Governing_eq}
Consider a transversely isotropic elastic body occupying a bounded domain $\Omega \subset \mathbb{R}^d,\,d=\{2,3\}$, with boundary $\Gamma = \Gamma_D \cup \Gamma_N$ having exterior unit normal $\bm{n}$. Here $\Gamma_D$ is the Dirichlet boundary, $\Gamma_N$ the Neumann boundary, and $\Gamma_D\cap \Gamma_N = \emptyset$.
The equilibrium equation is
\be\label{equilibrium}
	-\divg \bm\sigma(\bm{u}) = \bm{f}
\ee
and the boundary conditions are
\begin{subequations}\label{BC}
	\begin{align}
		\bm{u} = \bm{g} & \text{ on } \Gamma_D,\label{Dirichlet}\\
		\bm\sigma(\bm{u})\bm{n} = \bm{t} & \text{ on } \Gamma_N.\label{Neumann}
	\end{align}
\end{subequations}
Here $\bm\sigma$ is the Cauchy stress tensor defined by equation \eqref{cauchy_tensor}, $\bm{u}$ is the displacement vector, $\bm{f}$ is the body force, $\bm{g}$ a prescribed displacement, and $\bm{t}$ a prescribed surface traction.

We denote by $\mathcal{L}^2(\Omega)$ the space of (equivalence classes of) functions which are square-integrable on $\Omega$, and by $\mathcal{H}^1(\Omega)$ the space of (equivalence classes of) functions which, together with their generalized first derivatives, are in $\mathcal{L}^2(\Omega)$. We set
\[
\mathcal{V} = \{\bm{u} \in \big[\mathcal{H}^1(\Omega)\big]^d; \;\bm{u} = \bm{0} \; \text{on} \; \Gamma_D\},
\]
which is endowed with the norm
\[
\Vnorm{\cdot} = \|\cdot\|_{[\mathcal{H}^1(\Omega)]^d}.
\]
To take into account the non-homogeneous boundary condition \eqref{Dirichlet}, we define the function $\bm{u}_g \in [\mathcal{H}^1(\Omega)]^d$ such that $\bm{u}_g = \bm{g}$ on $\Gamma_D$, and the bilinear form $a(\cdot,\cdot)$ and linear functional $l(\cdot)$ by
\begin{subequations}\label{def}
\begin{align}
a: [\mathcal{H}^1(\Omega)]^d \times [\mathcal{H}^1(\Omega)]^d \rightarrow \mathbb{R},& \hspace{1cm} a(\bm{u},\bm{v}) = \intO{\bm{\sigma}(\bm{u}):\bm{\varepsilon}(\bm{v})},\label{adef}\\
l: [\mathcal{H}^1(\Omega)]^d \rightarrow \mathbb{R},& \hspace{1cm} l(\bm{v}) = \intO{\bm{f}\cdot\bm{v}} + \intG{N}{\bm{t}\cdot\bm{v}} - a(\bm{u}_g,\bm{v}).\label{ldef}
\end{align}
\end{subequations}
The weak form of the problem is then as follows: given $\bm{f} \in [\mathcal{L}^2(\Omega)]^d$ and $\bm{t} \in [\mathcal{H}^{1/2}(\Gamma_N)]^d$ \footnote{$\mathcal{H}^{1/2}(\Gamma_N)$ is the space of (equivalence classes of) functions defined on part of or an entire boundary $\Gamma$, with the property that the quantity
$\|v\|_{1/2,\Gamma} := \int_\Gamma\int_\Gamma \left(|v|^2 + \frac{|v(\bm{x}) - v(\bm{y})|^2}{|\bm{x} - \bm{y}|^2}\right)\,ds(\bm{x})ds(\bm{y})$
is bounded, and hence defines a norm on $H^{1/2}(\Gamma)$.},
find $\bm{U} \in [\mathcal{H}^1(\Omega)]^d$ such that $\bm{U}=\bm{u}+\bm{u}_g, \bm{u} \in \mathcal{V}$, and
\be\label{weak_form}
a(\bm{u},\bm{v}) = l(\bm{v}) \hspace{1cm} \forall\ \bm{v}\in \mathcal{V}.
\ee
We write the bilinear form as
\[
a(\bm{u},\bm{v}) = a^{iso}(\bm{u},\bm{v}) + a^{ti}(\bm{u},\bm{v}),
\]
where
\begin{subequations}\label{a_form}
\begin{align}
a^{iso}(\bm{u},\bm{v})
				&= \lambda \intO{(\nabla \cdot \bm{u})(\nabla \cdot \bm{v})}
					+ 2 \mu_t \intO{\bm{\varepsilon}(\bm{u}):\bm{\varepsilon}(\bm{v})},\label{a_iso}\\
a^{ti}(\bm{u},\bm{v})
					&=\alpha \intO{\big((\bm{M}:\bm{\varepsilon}(\bm{u}))(\nabla \cdot \bm{v}) + (\nabla \cdot \bm{u})(\bm{M}:\bm{\varepsilon}(\bm{v}))\big)}
				+ \beta \intO{(\bm{M}:\bm{\varepsilon}(\bm{u}))(\bm{M}:\bm{\varepsilon}(\bm{v}))}\nonumber\\
					&+ \gamma \intO{\big(\bm{\varepsilon}(\bm{u})\bm{M}:\bm{\varepsilon}(\bm{v}) + \bm{M}\bm{\varepsilon}(\bm{u}):\bm{\varepsilon}(\bm{v})\big)}.\label{a_TI}
\end{align}
\end{subequations}
Note that $a(\cdot,\cdot)$ is symmetric and corresponds to a positive definite operator, as shown in \cite{RGR}, meaning that the weak problem is well-posed. 
Regarding regularity of the solution, for $\Omega$ bounded and convex, for example, and $\bm{f}$ and $\bm{t}$ defined as in the line after \eqref{def}, there exists a unique solution $\bm{u} \in [H^2(\Omega)]^d$ (see for example \cite{Necas1967,Brenner1992}).

We introduce the following notation for the relevant norm and seminorms:
\[
\|\cdot\|_{0,*} = \|\cdot\|_{[\mathcal{L}^2(*)]^d}, \quad |\cdot|_{1,*} = |\cdot|_{[\mathcal{H}^1(*)]^d}, \quad \text{and} \quad |\cdot|_{2,*} = |\cdot|_{[\mathcal{H}^2(*)]^d}.
\]
\section{Discontinuous Galerkin finite element approximations}\label{Sec:DGFEM}
Suppose that $\Omega$ is polygonal (in $\mathbb{R}^2$) or polyhedral (in $\mathbb{R}^3$), partitioned into a triangular/tetrahedral conforming mesh comprising $n_e$ disjoint subdomains $\Omega_e$, where each $\Omega_e$ has  boundary $\partial\Omega_e$, consisting of edges/faces $E$, and with outward unit normal $\bm{n}_e$.
Denote by $\mathcal{T}_h := \{\Omega_e\}_e$ the set of all elements, and let $T(\Gamma) := \prod_{\Omega_e \in \mathcal{T}_h} [\mathcal{L}^2(\partial\Omega_e)]$.
Define $h_E:= \mathrm{diam}(E)$, $h_e:= \mathrm{diam}(\Omega_e)$, and $h := \mathrm{max}_{\Omega_e\in{\cal T}_h} \{h_e\}$.
We define the discrete space $\mathcal{V}^h_{\scaleto{DG}{4pt}} \subset \mathcal{V}$ by
\be\label{Vh_DG_space}
\mathcal{V}^h_{\scaleto{DG}{4pt}} := \{ \bm{v} \in [\mathcal{L}^2(\Omega)]^d : \bm{v}|_{\Omega_e} \in [\mathcal{P}_1(\Omega_e)]^d, \;\; \forall \;\Omega_e \in \mathcal{T}_h\},
\ee
where $\mathcal{P}_1(\Omega)$ is the space of polynomials on $\Omega$ of maximum total degree $1$.

Let $\Omega_i, \Omega_e \in \mathcal{T}_h$ be two elements sharing an interior edge $E = \partial\Omega_i \cap \partial\Omega_e$,
and let $\bm{n}$ be the outward normal to $\Omega_i$.
Then for any vector quantity $\bm{v} \in [T(\Gamma)]^d$ and any second order tensor $\boldsymbol{\tau} \in [T(\Gamma)]^{d\times d}$, we define the jumps
\[
\lfloor \bm{v} \rfloor = (\bm{v}_i - \bm{v}_e) \otimes \bm{n},\hspace{.5cm}
\lfloor \boldsymbol{\tau} \rfloor = (\boldsymbol{\tau}_i - \boldsymbol{\tau}_e) \bm{n},\hspace{.5cm}
[\bm{v}] = (\bm{v}_i - \bm{v}_e) \cdot \bm{n},
\]
and the averages
\[
\lbrace\bm{v}\rbrace = \frac{1}{2} (\bm{v}_i + \bm{v}_e),\hspace{.5cm}
\lbrace\boldsymbol{\tau}\rbrace = \frac{1}{2} (\boldsymbol{\tau}_i + \boldsymbol{\tau}_e),
\]
where subscripts $i$ and $e$ denote values on the elements $\Omega_i$ and $\Omega_e$, respectively.
The space $\mathcal{V}^h_{\scaleto{DG}{4pt}}$ is endowed with the norm (see for example \cite{Beverley,Wihler})
\be\label{DG_norm}
||\bm{u}||_{DG}^2 := \sumOe \|\bm{\varepsilon}(\bm{u})\|_{0,\Omega_e}^2 + \dfrac{1}{2}\sumGiD \dfrac{1}{h_E}\|\lfloor\bm{u}\rfloor\|_{0,E}^2,
\ee
where $\Gamma_{iD}$ is the union of all interior edges and all Dirichlet boundary edges.

The general Interior Penalty Discontinuous Galerkin (IPDG) formulation is as follows \cite{Beverley,Wihler}:
for all $\bm{v} \in \mathcal{V}_{\scaleto{DG}{4pt}}^h$, find $\bm{u}_h \in \mathcal{V}_{\scaleto{DG}{4pt}}^h$ such that
\be\label{DGweakform}
a_h(\bm{u}_h,\bm{v}) = l_h(\bm{v}),
\ee
where
\begin{align}\label{ah_DG}
a_h(\bm{u},\bm{v}) =& 
\sumintOe{\bm\sigma(\bm{u}):\bm\varepsilon(\bm{v})}
-  \sum_{E\in\Gamma_{iD}}\int_E\{\bm\sigma(\bm{u})\}:\lfloor\bm{v}\rfloor\,ds
+ \theta  \sum_{E\in\Gamma_{iD}}\int_E \lfloor\bm{u}\rfloor:\{\bm\sigma(\bm{v})\}\,ds\nonumber\\
&+ \sum_{E\in\Gamma_{iD}}\int_E \frac{k}{h_E}\mathbb{C}\lfloor\bm{u}\rfloor:\lfloor\bm{v}\rfloor \,ds,
\end{align}
and
\begin{align}\label{lh_DG}
l_h(\bm{v}) &=
\sumintOe{\bm{f}\cdot\bm{v}}
+ \sum_{E\in\Gamma_{N}}\int_E\bm{t}\cdot\bm{v}\,ds
+ \theta \sum_{E\in\Gamma_{D}}\int_E(\bm{g}\otimes\bm{n}):\bm\sigma(\bm{v})\,ds\nonumber\\
&+  \sum_{E\in\Gamma_{D}}\int_E \frac{k}{h_E}\mathbb{C}(\bm{g}\otimes\bm{n}):(\bm{v}\otimes\bm{n}) \,ds.
\end{align}
Here the elasticity tensor $\mathbb{C}$ is as defined in \eqref{tangent_moduli}, the stress tensor $\bm\sigma$ is as defined in \eqref{cauchy_tensor}, $k$ is a non-negative stabilization parameter, and
$\theta$ is a switch that distinguishes the three methods, ($\theta = 1$ for the Nonsymmetric Interior Penalty Galerkin (NIPG) method, $\theta = -1$ for Symmetric Interior Penalty Galerkin (SIPG), and $\theta = 0$ for Incomplete Interior Penalty Galerkin (IIPG)).

We note that it is possible to assign different stabilization parameters to the different terms in the stabilization.
Thus alternatively, we can define the bilinear form and the linear functional
\begin{align}
\hat{a}_h(\bm{u},\bm{v}) =& 
\sumintOe{\bm\sigma(\bm{u}):\bm\varepsilon(\bm{v})}
- \sumintGiD{\{\bm\sigma(\bm{u})\}:\lfloor\bm{v}\rfloor}
+ \theta \sumintGiD{\lfloor\bm{u}\rfloor:\{\bm\sigma(\bm{v})\}}\nonumber\\
&+ k_\lambda \lambda\sumintGiD{[\bm{u}] [\bm{v}]}
+ k_\mu \mu_t\sumintGiD{ \lfloor \bm{u} \rfloor : \lfloor \bm{v} \rfloor}\nonumber\\
&+ k_\alpha \alpha\sumintGiD{\Big((\bm{M}:\lfloor\bm{u}\rfloor)[\bm{v}] + [\bm{u}](\bm{M}:\lfloor\bm{v}\rfloor)\Big)}\label{ah_DG_k}\\
&+ k_\beta \beta\sumintGiD{ (\bm{M}:\lfloor\bm{u}\rfloor)(\bm{M}:\lfloor\bm{v}\rfloor)}\nonumber\\
&+ k_\gamma \gamma\sumintGiD{ (\lfloor\bm{u}\rfloor\bm{M}+\bm{M}\lfloor\bm{u}\rfloor) : \lfloor \bm{v} \rfloor},\nonumber
\end{align}
and
\begin{align}
\hat{l}_h(\bm{v}) & =
\sumintOe{\bm{f}\cdot\bm{v}}
+ \sumintG{N}{\bm{t}\cdot\bm{v}}
+ \theta \sumintG{D}{(\bm{g}\otimes\bm{n}):\bm\sigma(\bm{v})}\nonumber\\
&+ k_\lambda \lambda\sumintG{D}{ (\bm{g}\cdot\bm{n}) (\mathbf{v}\cdot\bm{n})}
+ k_\mu \mu_t\sumintG{D}{\bm{g} \cdot \bm{v}}\nonumber\\
&+ k_\alpha \alpha\sumintG{D}{\Big((\bm{M}:\bm{g}\otimes\bm{n})(\bm{v}\cdot\bm{n}) + (\bm{g}\cdot\bm{n})(\bm{M}:\bm{v}\otimes\bm{n})\Big)}\label{lh_DG_k}\\
&+ k_\beta \beta\sumintG{D}{(\bm{M}:\bm{g}\otimes\bm{n})(\bm{M}:\bm{v}\otimes\bm{n})} \nonumber\\
& + k_\gamma \gamma\sumintG{D}{\Big((\bm{g}\otimes\bm{n})\bm{M}+\bm{M}(\bm{g}\otimes\bm{n})\Big) : \bm{v}\otimes\bm{n}},\nonumber
\end{align}
where $k_\mu,k_\lambda,k_\alpha,k_\beta,\;\text{and}\;k_\gamma$ are non-negative stabilization parameters.

We confine attention to homogeneous bodies, so that the fibre direction $\bm{a}$ is constant.

Some useful bounds involving $\bm{v} \in [\mathcal{H}^1(\Omega_e)]^d$ and $\phi \in [\mathcal{L}^2(\Omega_e)]^d$ are:
\begin{subequations}
\begin{align}
&\|\bm{v}\|_{0,E} \leq C h_e^{-1/2} \|\bm{v}\|_{0,\Omega_e},\label{bound_e}\\
&\|\bm{v}\|_{0,\partial\Omega_e} \leq C\left( h_e^{-1/2} \|\bm{v}\|_{0,\Omega_e} + h_e^{1/2} |\bm{v}|_{1,\Omega_e}\right),\label{bound_a}\\
& \sum_{E\in\Gamma_{iD}} \dfrac{1}{h_E} \|\lfloor\bm{v}\rfloor\|^2_{0,E} \leq \sum_{\Omega_e\in\mathcal{T}_h} \sum_{E\in\partial\Omega_e^{iD}} \dfrac{2}{h_E} \|\bm{v}\|^2_{0,E},\label{bound_d}\\
& \sum_{E\in\Gamma_{iD}} h_E \|\{\phi\}\|^2_{0,E} \leq C \sum_{\Omega_e\in\mathcal{T}_h} h_e \|\phi\|^2_{0,\partial\Omega_e^{iD}},\label{bound_b}\\
& \sum_{E\in\Gamma_{iD}} \dfrac{1}{h_E} \|\{\phi\}\|^2_{0,E} \leq \sum_{\Omega_e\in\mathcal{T}_h} \sum_{E\in\partial\Omega_e^{iD}} \dfrac{1}{h_E} \|\phi\|^2_{0,E}\label{bound_c}.
\end{align}
\end{subequations}

%
\subsection{Consistency}\label{sec:consistency}
Given the exact solution $\bm{u}$, and assuming that $\bm{u} \in [\mathcal{H}^2(\Omega)]^d$, the problem is consistent if, for any $\bm{v} \in \mathcal{V}_{\scaleto{DG}{4pt}}^h$, 
$$a_h(\bm{u},\bm{v}) - l_h(\bm{v}) = 0.$$
Using the fact that $\lfloor\bm{u}\rfloor_{|_E} = [\bm{u}]_{|_E} = \lfloor\bm\sigma(\bm{u})\rfloor_{|_E} = 0 \;\;\forall\;E\in\Gamma_i$ (set of all interior edges),
and $\lfloor\bm{u}\rfloor = \bm{g}\otimes\bm{n}$ and $[\bm{u}] = \bm{g}\cdot \bm{n}$ on $\Gamma_D$,
the proof of consistency is straightforward and may be carried out in a single argument for all three cases.
%
\subsection{Coercivity}\label{sec:coercivity}
The bilinear form $a_h(\cdot,\cdot)$ is coercive if, for any $\bm{v} \in \mathcal{V}_{\scaleto{DG}{4pt}}^h$,
\[
a_h(\bm{v},\bm{v}) \geq K \|\bm{v}\|_{DG}^2,
\]
where $K> 0$ is a constant. To prove coercivity, each IP method will be investigated separately, as different approaches are used for each of them.

The bilinear form $a_h(\cdot,\cdot)$ defined by \eqref{ah_DG}, which uses a fixed stabilization parameter $k$ for all stabilization terms, is used here.
%
\paragraph{NIPG ($\theta = 1$)}
For the non-symmetric IPDG case, for any $\bm{v} \in \mathcal{V}_{\scaleto{DG}{4pt}}^h$, we have
\begin{align}
a_h(\bm{v},\bm{v}) &= \sumintOe{\bm\sigma(\bm{v}):\bm\varepsilon(\bm{v})}
	+ \sum_{E\in\Gamma_{iD}}\int_E \frac{k}{h_E}\mathbb{C}\lfloor\bm{v}\rfloor:\lfloor\bm{v}\rfloor\,ds.\label{NIPG_coerc}
\end{align}
Using matrix notation, with $\underline{\bm\sigma} = [\sigma_{11}\ \sigma_{22}\ \sigma_{33}\ \sigma_{12}\ \sigma_{13}\ \sigma_{23}]^T$, 
$\underline{\bm\varepsilon} = [\varepsilon_{11}\ \varepsilon_{22}\ \varepsilon_{33}\ 2\varepsilon_{12}\ 2\varepsilon_{13}\ 2\varepsilon_{23}]^T$, and $\underline{\mathbb{C}}$ the corresponding $6\times 6$ matrix, we have
\[
\bm\sigma(\bm{v}):\bm\varepsilon(\bm{v}) = \underline{\bm\varepsilon}^T \underline{\mathbb{C}}\, \underline{\bm\varepsilon}.
\]
Given that $\underline{\mathbb{C}}$ is symmetric and positive definite, it has a set of six positive eigenvalues $\Lambda_i, 1\leq i \leq 6$, and a corresponding set of mutually orthogonal eigenvectors $\bm\xi^i, 1\leq i \leq 6$.
Thus, $\underline{\mathbb{C}}$ can be written as
\be\label{C_diag}
\underline{\mathbb{C}} = \underline{\mathbb{Q}}^T\underline{\mathbb{D}}\,\underline{\mathbb{Q}},
\ee
in which $\underline{\mathbb{D}}$ is a diagonal matrix whose diagonal components are the eigenvalues $\Lambda_i$, and $\underline{\mathbb{Q}}$ is an orthogonal matrix whose columns are the eigenvectors $\bm\xi^i$; that is,
\be\label{D_Q}
\mathbb{D}_{ij} = \Lambda_i\delta_{ij} \quad \text{and} \quad \mathbb{Q}_{ij} = \xi^i_j.
\ee
For any vector $\underline{\bm\varepsilon}$, we define
\be\label{eta}
\underline{\bm\eta} := \underline{\mathbb{Q}}\,\underline{\bm\varepsilon},
\ee
then
\be\label{epsilon}
\underline{\bm\varepsilon} = \underline{\mathbb{Q}}^T\underline{\bm\eta}.
\ee
Therefore,
\begin{align*}
\underline{\bm\varepsilon}^T\underline{\mathbb{C}}\,\underline{\bm\varepsilon}
&= \underline{\bm\eta}^T\underline{\mathbb{Q}} (\underline{\mathbb{Q}}^T\underline{\mathbb{D}}\,\underline{\mathbb{Q}}) \underline{\mathbb{Q}}^T\underline{\bm\eta}\\
&= \underline{\bm\eta}^T \underline{\mathbb{D}} \underline{\bm\eta}\\
&= {\eta}_{i} {\mathbb{D}}_{ij} {\eta}_j\\
&= \sum_{i,j} {\eta}_{i} \Lambda_i\delta_{ij} {\eta}_j\\
&= \sum_{i} {\eta}_{i}^2 \Lambda_i\\
&\geq \Lambda_{min} \sum_{i} {\eta}_{i}^2\\
&= \Lambda_{min} |\underline{\bm\eta}|^2\\
&= \Lambda_{min} |\underline{\bm\varepsilon}|^2,
\end{align*}
in which
\[
\Lambda_{min} = \min \{\Lambda_i, 1\leq i \leq 6\}.
\]
Hence, we have, for $\bm{v} \in \mathcal{V}_{\scaleto{DG}{4pt}}^h$
\begin{align*}
\intOe{\bm\sigma(\bm{v}):\bm\varepsilon(\bm{v}) \;dx }
&\geq \Lambda_{min} \intOe{|\underline{\bm\varepsilon}(\bm{v})|^2} \;dx\\
& = \Lambda_{min} \|\bm\varepsilon(\bm{v})\|^2_{0,\Omega_e}.
\end{align*}
By choosing $\bm{\varepsilon} = \lfloor \bm{v} \rfloor$ in \eqref{epsilon}, we have for $\bm{v} \in \mathcal{V}_{\scaleto{DG}{4pt}}^h$
\[
\sum_{E\in\Gamma_{iD}}\int_E \frac{k}{h_E}\mathbb{C}\lfloor\bm{v}\rfloor:\lfloor\bm{v}\rfloor\,ds \geq k \Lambda_{min} \sum_{E\in\Gamma_{iD}} \dfrac{1}{h_E} \|\lfloor \bm{v} \rfloor\|_{0,E}^2.
\]
Therefore,
\begin{align}
a_h(\bm{v},\bm{v}) &\geq \Lambda_{min} \sumOe\|\bm\varepsilon(\bm{v})\|^2_{0,\Omega_e} + k \Lambda_{min} \sum_{E\in\Gamma_{iD}}\frac{1}{h_E} ||\lfloor\bm{v}\rfloor||^2_{0,E}\nonumber\\
&\geq K \DGnorm{\bm{v}}^2\label{ah_NIPG}
\end{align}
with
\be\label{NIPG_coercivity_bound}
K = \Lambda_{min} \min\left\{1, 2k\right\}>0.
\ee
We conclude that the bilinear form is coercive for the NIPG case.
%
\paragraph{SIPG ($\theta = -1$)}
For the SIPG case, the bilinear form can be written as follows, for any $\bm{v} \in \mathcal{V}_{\scaleto{DG}{4pt}}^h$:
\begin{align}
a_h(\bm{v},\bm{v}) &= \sumOe\intOe \mathbb{C}\bm\varepsilon(\bm{v}):\bm{\varepsilon}(\bm{v}) \; dx
	- 2\sum_{E\in\Gamma_{iD}} \int_E \{\mathbb{C}\bm\varepsilon(\bm{v})\}:\lfloor\bm{v}\rfloor\,ds
	+ \sum_{E\in\Gamma_{iD}}\int_E \frac{k}{h_E}\mathbb{C}\lfloor\bm{v}\rfloor:\lfloor\bm{v}\rfloor\,ds.\label{SIPG_coerc}
\end{align}
Note that since the elasticity tensor $\mathbb{C}$ possesses major symmetries and is positive definite, there exists a unique square root $\mathbb{C}^{1/2}$ such that,
for any second-order tensors $\bm{A}$ and $\bm{B}$, we have:
\be\label{C_equality}
\mathbb{C}\bm{A} : \bm{B} = \mathbb{C}^{1/2}\bm{A} : \mathbb{C}^{1/2} \bm{B}.
\ee
We have
\begin{align*}
\mathscr{A} :=& - 2\sum_{E\in\Gamma_{iD}} \int_E \{\mathbb{C}\bm\varepsilon(\bm{v})\}:\lfloor\bm{v}\rfloor\,ds\\
=& -2\sum_{E\in\Gamma_{iD}} \int_E \mathbb{C}^{1/2}\{\bm\varepsilon(\bm{v})\}:\mathbb{C}^{1/2}\lfloor\bm{v}\rfloor\,ds.
\end{align*}
Thus,
\begin{align*}
\mathscr{A}
\geq& -2 \left(\sum_{E\in\Gamma_{iD}} \|h_E^{1/2}\mathbb{C}^{1/2}\{\bm\varepsilon(\bm{v})\}\|_{0,E}^2\right)^{1/2} \left(\sum_{E\in\Gamma_{iD}} \|h_E^{-1/2}\mathbb{C}^{1/2}\lfloor\bm{v}\rfloor\|_{0,E}^2\right)^{1/2}\\
\geq& -2 C \left(\sumOe h_e\|\mathbb{C}^{1/2}{\bm\varepsilon(\bm{v})}\|_{0,\partial\Omega_e^{iD}}^2\right)^{1/2} \left(\sum_{E\in\Gamma_{iD}} \|h_E^{-1/2}\mathbb{C}^{1/2}\lfloor\bm{v}\rfloor\|_{0,E}^2\right)^{1/2}\quad {\color{gray} (\text{using \eqref{bound_b}})}\\
\geq& -2 C\left(\sumOe \|\mathbb{C}^{1/2}{\bm\varepsilon(\bm{v})}\|_{0,\Omega_e}^2\right)^{1/2} \left(\sum_{E\in\Gamma_{iD}} \|h_E^{-1/2}\mathbb{C}^{1/2}\lfloor\bm{v}\rfloor\|_{0,E}^2\right)^{1/2}\quad {\color{gray} (\text{using \eqref{bound_e}})}\\
\geq& - C\epsilon\left(\sumOe \|\mathbb{C}^{1/2}{\bm\varepsilon(\bm{v})}\|_{0,\Omega_e}^2\right)
-\dfrac{1}{\epsilon}\left(\sum_{E\in\Gamma_{iD}} \|h_E^{-1/2}\mathbb{C}^{1/2}\lfloor\bm{v}\rfloor\|_{0,E}^2\right)\\
=& -\epsilon C \sumOe \intOe \mathbb{C}^{1/2}{\bm\varepsilon(\bm{v})}:\mathbb{C}^{1/2}{\bm\varepsilon(\bm{v})} \,dx
-\dfrac{1}{\epsilon}\sum_{E\in\Gamma_{iD}}\dfrac{1}{h_E}\int_E \mathbb{C}^{1/2}\lfloor\bm{v}\rfloor:\mathbb{C}^{1/2}\lfloor\bm{v}\rfloor\,ds\\
=& -\epsilon C \sumOe \intOe \mathbb{C}{\bm\varepsilon(\bm{v})}:{\bm\varepsilon(\bm{v})} \,dx
-\dfrac{1}{\epsilon}\sum_{E\in\Gamma_{iD}}\dfrac{1}{h_E}\int_E \mathbb{C}\lfloor\bm{v}\rfloor:\lfloor\bm{v}\rfloor\,ds.
\end{align*}
Therefore, we have
\begin{align*}
a_h(\bm{v},\bm{v}) &\geq (1-\epsilon C) \sumOe \intOe \mathbb{C}{\bm\varepsilon(\bm{v})}:{\bm\varepsilon(\bm{v})} \,dx
+ \left(k-\dfrac{1}{\epsilon}\right)\sum_{E\in\Gamma_{iD}}\dfrac{1}{h_E}\int_E \mathbb{C}\lfloor\bm{v}\rfloor:\lfloor\bm{v}\rfloor\,ds.
\end{align*}
We set the coefficient in the first term to a constant $m>0$,
giving $\epsilon = \dfrac{1-m}{C}$, and restrict $m$ to $0 < m < 1$ to ensure $\epsilon>0$.

Then
\begin{align*}
2\left(k-\dfrac{1}{\epsilon}\right) \geq m &\Leftrightarrow k-\dfrac{C}{1-m} \geq \dfrac{m}{2}\\
&\Leftrightarrow k \geq \dfrac{m}{2} + \dfrac{C}{1-m}.
\end{align*}
With this choice of $k$, we have
\begin{align}
a_h(\bm{v},\bm{v})
&\geq m\left(\sumOe \intOe \mathbb{C}{\bm\varepsilon(\bm{v})}:{\bm\varepsilon(\bm{v})} \,dx
+ \dfrac{1}{2}\sum_{E\in\Gamma_{iD}}\dfrac{1}{h_E}\int_E \mathbb{C}\lfloor\bm{v}\rfloor:\lfloor\bm{v}\rfloor\,ds\right)\nonumber\\
&\geq m \Lambda_{min} \left(\sumOe \|\bm\varepsilon(\bm{v})\|_{0,\Omega_e}^2
+ \dfrac{1}{2} \sum_{E\in\Gamma_{iD}} \dfrac{1}{h_E} \|\lfloor\bm{v}\rfloor\|_{0,E}^2\right)\nonumber\\
&= K \|\bm{v}\|_{DG}^2,\label{ah_SIPG}
\end{align}
with
\be\label{SIPG_coercivity_bound}
K = m \Lambda_{min}.
\ee
Therefore, we can conclude that the bilinear form $a_h$ is coercive for SIPG.
%
\paragraph{IIPG ($\theta = 0$)}
The corresponding bilinear form is written as follows, for any $\bm{v} \in \mathcal{V}_{\scaleto{DG}{4pt}}^h$:
\begin{align*}
a_h(\bm{v},\bm{v}) &= \sumOe\intOe \mathbb{C}\bm\varepsilon(\bm{v}):\bm{\varepsilon}(\bm{v}) \; dx
	- \sum_{E\in\Gamma_{iD}} \int_E \{\mathbb{C}\bm\varepsilon(\bm{v})\}:\lfloor\bm{v}\rfloor\,ds
	+ \sum_{E\in\Gamma_{iD}}\int_E \frac{k}{h_E}\mathbb{C}\lfloor\bm{v}\rfloor:\lfloor\bm{v}\rfloor\,ds.
\end{align*}
The only difference between this form and the SIPG bilinear form is the coefficient in the second term;
thus the proof of coercivity for IIPG case is identical to that for the SIPG case up to a constant.

We summarize these results.
\begin{theorem}\label{thm:coercivity}
The bilinear functional $a_h(\cdot,\cdot)$ defined in \eqref{DGweakform} is coercive if:
\begin{enumerate}[label=(\alph*)]
\item when $\theta = 1$, $k > 0$;
\item when $\theta \in \{0,-1\}$,
\[
k \geq \dfrac{m}{2} + \dfrac{C}{1-m},
\]
where $C$ is a positive constant to be calculated, and $0 < m < 1$.
\end{enumerate}
\end{theorem}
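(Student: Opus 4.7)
The plan is to establish coercivity case-by-case for the three methods, since the sign of $\theta$ dictates how the edge terms combine and therefore which technique is available. In all three cases the underlying fact I will exploit is that the elasticity tensor $\mathbb{C}$ in \eqref{tangent_moduli} is pointwise symmetric positive definite under the stability conditions \eqref{PS_conds}, so its matrix representation admits a spectral decomposition $\underline{\mathbb{Q}}^T\underline{\mathbb{D}}\,\underline{\mathbb{Q}}$ with smallest eigenvalue $\Lambda_{\min}>0$; this yields $\mathbb{C}\bm{A}:\bm{A}\geq \Lambda_{\min}|\bm{A}|^2$ for any second-order tensor $\bm{A}$.

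For case (a), NIPG with $\theta=1$, the two consistency terms $-\int_E\{\bm\sigma(\bm v)\}:\lfloor\bm v\rfloor$ and $+\int_E\lfloor\bm v\rfloor:\{\bm\sigma(\bm v)\}$ cancel exactly since $\bm\sigma$ is symmetric, leaving only the volume and stabilization contributions. I would then apply the eigenvalue bound above pointwise—once to $\bm\varepsilon(\bm v)$ on each $\Omega_e$, once to $\lfloor\bm v\rfloor$ on each $E$—to arrive at $a_h(\bm v,\bm v)\geq \Lambda_{\min}\min\{1,2k\}\DGnorm{\bm v}^2$. Any $k>0$ therefore suffices; no mesh-dependent constant enters.

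For case (b), $\theta\in\{0,-1\}$, the consistency terms no longer cancel but combine into a single indefinite term (with coefficient $-2$ for SIPG, $-1$ for IIPG) that must be absorbed into the coercive contributions. The cleanest path is to factor $\mathbb{C}=\mathbb{C}^{1/2}\mathbb{C}^{1/2}$ so that the troublesome term becomes an inner product $\mathbb{C}^{1/2}\{\bm\varepsilon(\bm v)\}:\mathbb{C}^{1/2}\lfloor\bm v\rfloor$ on each edge. Inserting compensating factors $h_E^{\pm 1/2}$, applying Cauchy--Schwarz, and then using the trace bounds \eqref{bound_b}--\eqref{bound_e} moves the volume-side quantity from $\partial\Omega_e$ back into $\Omega_e$. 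Young's inequality with a free parameter $\epsilon>0$ then yields
\begin{equation*}
a_h(\bm v,\bm v)\geq (1-\epsilon C)\sumOe\intOe \mathbb{C}\bm\varepsilon(\bm v):\bm\varepsilon(\bm v)\,dx + \Bigl(k-\tfrac{1}{\epsilon}\Bigr)\sum_{E\in\Gamma_{iD}}\frac{1}{h_E}\int_E \mathbb{C}\lfloor\bm v\rfloor:\lfloor\bm v\rfloor\,ds.
\end{equation*}
Fixing the first coefficient equal to a chosen $m\in(0,1)$ forces $\epsilon=(1-m)/C$; demanding that the second coefficient exceeds $m/2$ produces the stated threshold $k\geq m/2 + C/(1-m)$. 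A final application of $\mathbb{C}\bm{A}:\bm{A}\geq \Lambda_{\min}|\bm{A}|^2$ to both terms recovers the DG norm with coercivity constant $K=m\Lambda_{\min}$. The IIPG case follows identically, since halving the consistency coefficient only improves constants while leaving the algebra unchanged; that is why both values of $\theta$ fall under the same bound on $k$.

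The main obstacle I anticipate is keeping the constant $C$ in case (b) genuinely mesh- and parameter-independent, so that the threshold on $k$ is universal. This forces me to invoke shape-regularity of $\mathcal{T}_h$ when applying the trace inequalities \eqref{bound_e} and \eqref{bound_b}, and to verify that the $\mathbb{C}^{1/2}$ factors commute through the integrations so that $C$ does not absorb hidden dependence on the anisotropy parameters $\alpha$, $\beta$, $\gamma$. Once that check is in place, the three sub-arguments assemble directly into the theorem as stated.
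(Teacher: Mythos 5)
Your proposal is correct and follows essentially the same route as the paper: the spectral bound $\mathbb{C}\bm{A}:\bm{A}\geq\Lambda_{\min}|\bm{A}|^2$ for NIPG (giving $K=\Lambda_{\min}\min\{1,2k\}$), and for SIPG/IIPG the $\mathbb{C}^{1/2}$ factorization, weighted Cauchy--Schwarz with the trace bounds \eqref{bound_b} and \eqref{bound_e}, and Young's inequality with $\epsilon=(1-m)/C$, yielding $K=m\Lambda_{\min}$ under $k\geq m/2+C/(1-m)$. Your closing remark about shape-regularity and the constancy of $\mathbb{C}^{1/2}$ (so that $C$ carries no hidden dependence on $\alpha,\beta,\gamma$) is exactly the implicit assumption the paper relies on for a homogeneous body with constant fibre direction.
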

%
\subsection{Error bound}
As shown in \cite{Wihler}, one has uniform ($\lambda$-independent) convergence for the isotropic problem when linear triangles are used.
We present here a corresponding bound for transversely isotropic materials, assuming a constant fibre direction $\bm{a}$.
To establish  the bound, we adopt the same approach as in \cite{Wihler}; that is, splitting the error using a linear interpolant $\Pi_e\bm{u} \in \big[\mathcal{P}_1(\Omega_e)\big]^d$, for $\bm{u} \in \big[\mathcal{H}^2(\Omega_e)\big]^d$, which is defined by
\be\label{interpolant}
\Pi_e\bm{u}(\bar{\bm{x}}_E) := \dfrac{1}{h_E}\int_E \bm{u} \; ds \quad \forall\ E \in \partial\Omega_e,
\ee
where  $\bar{\bm{x}}_E$ is the midpoint of edge $E$.

The corresponding global interpolant $\Pi : \big[\mathcal{H}^2(\Omega_e)\big]^d \rightarrow \mathcal{V}^h_{\scaleto{DG}{4pt}}$ is defined by
\[
\Pi\bm{u}_{|_{\Omega_e}} = \Pi_e\bm{u} \quad \forall\;\Omega_e \in \mathcal{T}_h.
\]
\begin{proposition}
The interpolant has the following properties:
\begin{subequations}\label{interpolant_properties}
\begin{align}
& \int_E (\bm{u} - \Pi_e\bm{u}) \; ds = \bm{0},\label{interpolant_0}\\
& \int_E (\bm{u} - \Pi_e\bm{u}) \cdot \bm{n}_e \; ds = 0,\label{interpolant_1}\\
& \int_{\Omega_e} \nabla \cdot (\bm{u} - \Pi_e\bm{u}) \; dx = 0,\label{interpolant_2}\\
& \int_{\Omega_e} \bm{M}:\bm{\varepsilon} (\bm{u} - \Pi_e\bm{u}) \; dx = 0.\label{interpolant_3}
\end{align}
\end{subequations}
\end{proposition}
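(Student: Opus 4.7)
The plan is to prove the four identities in order, with (a) following directly from the definition and (b)--(d) following from (a) via simple integration-by-parts arguments that exploit the assumed constancy of the fibre direction $\bm{a}$.

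For (a), observe that $\Pi_e\bm{u}$ restricted to any edge (or face) $E\in\partial\Omega_e$ is an affine polynomial, so the midpoint (respectively, centroid) rule is exact on $E$. Hence $\int_E \Pi_e\bm{u}\,ds = h_E\,\Pi_e\bm{u}(\bar{\bm{x}}_E) = \int_E \bm{u}\,ds$, using the defining relation \eqref{interpolant} in the second equality. Subtracting gives the desired identity.

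Property (b) is immediate: since each face $E$ is flat, $\bm{n}_e$ is a constant vector on $E$, and therefore $\int_E (\bm{u}-\Pi_e\bm{u})\cdot\bm{n}_e\,ds = \bigl(\int_E (\bm{u}-\Pi_e\bm{u})\,ds\bigr)\cdot\bm{n}_e = 0$ by (a). For (c), I would apply the divergence theorem on $\Omega_e$, which converts the volume integral into a sum over the boundary faces of the quantities already controlled by (b).

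Property (d) is the main technical step and is where the assumption that $\bm{a}$ is constant (homogeneous body) is used in an essential way. The key observation is that for constant $\bm{a}$, writing $\bm{M}:\bm{\varepsilon}(\bm{v}) = a_i a_j\,\partial_j v_i$ (by the symmetry of $a_ia_j$) allows the integrand to be recast as the pure divergence $\nabla\cdot\bigl(\bm{a}\,(\bm{a}\cdot\bm{v})\bigr)$. Applying the divergence theorem then yields $\int_{\Omega_e}\bm{M}:\bm{\varepsilon}(\bm{v})\,dx = \sum_{E\subset\partial\Omega_e}(\bm{a}\cdot\bm{n}_e)\int_E \bm{a}\cdot\bm{v}\,ds$, where I have pulled the constant scalar $\bm{a}\cdot\bm{n}_e$ (constant on each flat face because both $\bm{a}$ and $\bm{n}_e$ are) out of the edge integral. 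Setting $\bm{v}=\bm{u}-\Pi_e\bm{u}$ and invoking (a) finishes the argument.

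The only genuine obstacle is (d); everything else is a one-line corollary of (a). The constancy of $\bm{a}$ is used twice there, once to convert $\bm{M}:\bm{\varepsilon}(\bm{v})$ into a divergence and once to extract $\bm{a}\cdot\bm{n}_e$ from the boundary integral, which is why this identity (and the subsequent locking analysis) is restricted to homogeneous bodies.
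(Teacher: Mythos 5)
Your proof is correct and follows essentially the same route as the paper: the key identity \eqref{interpolant_3} is obtained by integration by parts, with the interior term vanishing because $\bm{a}$ (hence $\bm{M}$) is constant and the boundary term vanishing via \eqref{interpolant_0}. The only difference is that you prove \eqref{interpolant_0}--\eqref{interpolant_2} directly from the midpoint-rule definition, whereas the paper simply cites Wihler for those three; your one-line arguments for them are correct.
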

\begin{proof}
The proofs of \eqref{interpolant_0}, \eqref{interpolant_1} and \eqref{interpolant_2} are given in \cite{Wihler}.

For \eqref{interpolant_3}, using integration by parts we have
\[
\int_{\Omega_e} \bm{M}:\bm{\epsilon} (\bm{u} - \Pi_e\bm{u}) \; dx = \int_{\partial\Omega_e} (\bm{u} - \Pi_e\bm{u}) \otimes \bm{n}_e:\bm{M} \; ds - \int_{\Omega_e} (\nabla\cdot\bm{M})\cdot(\bm{u} - \Pi_e\bm{u}) \; dx = 0.
\]
\end{proof}
\begin{proposition}
The following interpolation error estimates hold:
\begin{subequations}\label{interpolant_estimates}
\begin{align}
& \|\bm{u} - \Pi_e\bm{u}\|_{0,\Omega_e} + h_e|\bm{u} - \Pi_e\bm{u}|_{1,\Omega_e} \leq Ch^2_e |\bm{u}|_{2,\Omega_e},\label{estimate_0}\\
& |\bm{u} - \Pi_e\bm{u}|_{2,\Omega_e} = |\bm{u}|_{2,\Omega_e},\label{estimate_1}\\
& \|\nabla \cdot (\bm{u} - \Pi_e\bm{u})\|_{0,\Omega_e} \leq Ch_e |\nabla \cdot \bm{u}|_{1,\Omega_e},\label{estimate_2}\\
& |\nabla \cdot (\bm{u} - \Pi_e\bm{u})|_{1,\Omega_e} = |\nabla \cdot \bm{u}|_{1,\Omega_e},\label{estimate_3}\\
& \|\bm{M}:\bm\varepsilon(\bm{u} - \Pi_e\bm{u})\|_{0,\Omega_e} \leq C h_e |\bm{M}:\bm\varepsilon(\bm{u})|_{1,\Omega_e},\label{estimate_4}\\
& |\bm{M}:\bm\varepsilon(\bm{u} - \Pi_e\bm{u})|_{1,\Omega_e} = |\bm{M}:\bm\varepsilon(\bm{u})|_{1,\Omega_e}\label{estimate_5}
\end{align}
\end{subequations}
where $C>0$ is in each case a constant independent of $h_e$ and $\bm{u}$.
\end{proposition}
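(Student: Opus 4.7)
My plan is to derive each of the six estimates by exploiting two facts: (i) the interpolant $\Pi_e\bm{u}$ lies in $[\mathcal{P}_1(\Omega_e)]^d$ by construction, so its second derivatives vanish and $\nabla\cdot\Pi_e\bm{u}$ and $\bm{M}:\bm{\varepsilon}(\Pi_e\bm{u})$ are constants on $\Omega_e$; and (ii) the integral identities \eqref{interpolant_2} and \eqref{interpolant_3} proved in the preceding proposition identify those constants as mean values. Three of the six estimates are then essentially immediate, and the remaining three follow from standard Bramble--Hilbert/Poincar\'e arguments applied on the reference element and scaled.

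First I would dispatch the trivial estimates. For \eqref{estimate_1} I note that $\Pi_e\bm{u}$ is affine, so $|\Pi_e\bm{u}|_{2,\Omega_e}=0$ and hence $|\bm{u}-\Pi_e\bm{u}|_{2,\Omega_e}=|\bm{u}|_{2,\Omega_e}$. For \eqref{estimate_3}, $\nabla\cdot\Pi_e\bm{u}$ is constant on $\Omega_e$, giving $|\nabla\cdot\Pi_e\bm{u}|_{1,\Omega_e}=0$ and therefore $|\nabla\cdot(\bm{u}-\Pi_e\bm{u})|_{1,\Omega_e}=|\nabla\cdot\bm{u}|_{1,\Omega_e}$. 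For \eqref{estimate_5} the same argument, using that $\bm{M}:\bm{\varepsilon}(\Pi_e\bm{u})$ is constant (since $\bm{M}$ is constant under the homogeneity assumption and $\bm{\varepsilon}(\Pi_e\bm{u})$ is constant), yields the identity.

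Next I would treat \eqref{estimate_0}. The key observation is that $\Pi_e$ reproduces $[\mathcal{P}_1(\Omega_e)]^d$: if $\bm{p}$ is affine then $\frac{1}{h_E}\int_E \bm{p}\,ds=\bm{p}(\bar{\bm{x}}_E)$, so $\Pi_e\bm{p}$ and $\bm{p}$ are two affine fields that agree at the three edge midpoints (which are affinely independent in a triangle), hence coincide. With reproduction of $\mathcal{P}_1$ in hand, a standard Bramble--Hilbert argument on the reference triangle $\hat{\Omega}$, followed by affine scaling to $\Omega_e$, produces the estimate
\begin{equation*}
\|\bm{u}-\Pi_e\bm{u}\|_{0,\Omega_e}+h_e|\bm{u}-\Pi_e\bm{u}|_{1,\Omega_e}\le C h_e^{2}|\bm{u}|_{2,\Omega_e},
\end{equation*}
with $C$ independent of $h_e$ (provided the family $\mathcal{T}_h$ is shape-regular, which is implicit in the setting).

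Finally, for \eqref{estimate_2} and \eqref{estimate_4} I would invoke the mean-value identities from the previous proposition. Since $\nabla\cdot\Pi_e\bm{u}$ is a constant on $\Omega_e$ and \eqref{interpolant_2} forces $\int_{\Omega_e}\nabla\cdot(\bm{u}-\Pi_e\bm{u})\,dx=0$, that constant must equal the mean value $\overline{\nabla\cdot\bm{u}}$ of $\nabla\cdot\bm{u}$ over $\Omega_e$. The Poincar\'e--Wirtinger inequality on a shape-regular triangle then gives $\|\nabla\cdot\bm{u}-\overline{\nabla\cdot\bm{u}}\|_{0,\Omega_e}\le C h_e|\nabla\cdot\bm{u}|_{1,\Omega_e}$, which is exactly \eqref{estimate_2}. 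An identical argument with $\nabla\cdot$ replaced by the scalar functional $\bm{M}:\bm{\varepsilon}(\cdot)$, using \eqref{interpolant_3} in place of \eqref{interpolant_2} and the constancy of $\bm{M}:\bm{\varepsilon}(\Pi_e\bm{u})$, yields \eqref{estimate_4}. The mildest technical point, and the only spot I would take care with, is confirming the $\mathcal{P}_1$-reproducing property underlying \eqref{estimate_0}; everything else reduces to bookkeeping on top of \eqref{interpolant_2}--\eqref{interpolant_3} and standard scaling.
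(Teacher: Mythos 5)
Your proposal is correct and follows essentially the same route as the paper: estimates \eqref{estimate_1}, \eqref{estimate_3}, \eqref{estimate_5} from the affineness of $\Pi_e\bm{u}$, and \eqref{estimate_2}, \eqref{estimate_4} from the mean-value identities \eqref{interpolant_2}--\eqref{interpolant_3} combined with a Poincar\'e-type inequality, which is exactly how the paper handles \eqref{estimate_4} via Lemma A.3 of Wihler. The only difference is that where the paper simply cites Wihler for \eqref{estimate_0}--\eqref{estimate_3}, you reconstruct the standard arguments (the $\mathcal{P}_1$-reproduction of the edge-midpoint interpolant plus Bramble--Hilbert and scaling), which is the same underlying machinery.
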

\begin{proof}
The proofs for \eqref{estimate_0}-\eqref{estimate_3} are given in \cite{Wihler}.

Proof of \eqref{estimate_5}.\ \ Setting $\bm{U} = \bm{u} - \Pi_e\bm{u}$, then since $\Pi_e\bm{u} \in \big[\mathcal{P}_1(\Omega_e)\big]^d$, it follows that
\be\label{M_epsilon_H1}
|\bm{M}:\bm\varepsilon(\bm{U})|_{1,\Omega_e} = |\bm{M}:\bm\varepsilon(\bm{u})|_{1,\Omega_e}.
\ee
Proof of \eqref{estimate_4}. Lemma A.3 in \cite{Wihler} applied to $\bm{M}:\bm\varepsilon(\bm{U})$ gives
\[
\left\|\bm{M}:\bm\varepsilon(\bm{U}) - \dfrac{1}{|\Omega_e|}\int_{\Omega_e}\bm{M}:\bm\varepsilon(\bm{U})\;dx\right\|_{0,\Omega_e} \leq Ch_e|\bm{M}:\bm\varepsilon(\bm{U})|_{1,\Omega_e}.
\]
With \eqref{interpolant_3} and \eqref{M_epsilon_H1}, we obtain \eqref{estimate_4}.
\end{proof}
Let $\bm{u} \in \big[\mathcal{H}^2(\Omega_e)\big]^d$ be the exact solution to the problem and $\bm{u}_h \in \mathcal{V}^h_{DG}$ the corresponding finite element approximation;
then the approximation error is
\begin{align*}
\bm{e} &= \bm{u} - \bm{u}_h\\
	&= \underbrace{\bm{u} - \Pi\bm{u}}_{\bm{\eta}} + \underbrace{\Pi\bm{u} - \bm{u}_h}_{\bm{\xi}}.
\end{align*}
In particular $\bm{\xi}_{|_{\Omega_e}} \in [\mathcal{P}_1(\Omega_e)]^d$.
The DG-norm of the error is
\[
||\bm{e}||^2_{DG} \leq ||\bm{\eta}||^2_{DG} + ||\bm{\xi}||^2_{DG}.
\]
Starting with $||\bm{\eta}||^2_{DG}$ we have, from \eqref{DG_norm} and using \eqref{bound_a} and \eqref{bound_b},
\begin{align}
||\bm{\eta}||^2_{DG} & = \sumOe \|\bm{\varepsilon}(\bm{\eta})\|_{0,\Omega_e}^2 + \dfrac{1}{2}\sumGiD \dfrac{1}{h_E}\|\lfloor\bm{\eta}\rfloor\|_{0,E}^2 \nonumber\\
 &\leq C \Big( \sumOe \|\nabla\bm{\eta}\|^2_{0,\Omega_e}  +  \sumOe \sum_{E \in \partial\Omega_e^{iD}} \dfrac{1}{h_E}\|\bm{\eta}\|^2_{0,E} \Big) \nonumber \\
& \leq C\Big( \sumOe |\bm{\eta}|^2_{1,\Omega_e}  +  \sumOe \left( h_e^{-2}\|\bm{\eta}\|^2_{0,\Omega_e} + |\bm\eta|^2_{1,\Omega_e} \right)\Big) \nonumber \\
& \leq C\sumOe h_e^2 |\bm{u}|^2_{2,\Omega_e}  \label{eta_DG_norm}\,.
\end{align}
To bound $||\bm{\xi}||^2_{DG}$,
we have $||\bm{\xi}||^2_{DG} \leq \frac{1}{K}|a_h(\bm{\eta},\bm{\xi})|$ from coercivity of the bilinear form.
To bound $|a_h(\bm{\eta},\bm\xi)|$, the technique is to extract a factor of $\DGnorm{\bm\xi}$, leaving some function in terms of $\bm\eta$ which will be bounded by norms of the exact solution $\bm{u}$ from each term.
The fact that $\bm{\xi} \in [\mathcal{P}_1(\Omega)]^d$ so that $\bm{\varepsilon}(\bm{\xi}), \nabla\cdot\bm{\xi}$, and $\nabla\bm{\xi}$ are constants, is also useful.

We have
\[
|a_h(\bm{\eta},\bm{\xi})| \leq |a^{iso}_h(\bm{\eta},\bm{\xi})| + |a^{ti}_h(\bm{\eta},\bm{\xi})|,
\]
where the isotropic part is bounded as follows (see \cite{Beverley}):
\be\label{iso_error_bound}
|a^{iso}_h(\bm{\eta},\bm{\xi})| \leq C ||\bm{\xi}||_{DG} \left[\sumOe h^2_e \left(\mu_t^2|\bm{u}|^2_{2,\Omega_e} + \lambda^2|\nabla\cdot\bm{u}|^2_{1,\Omega_e}\right)\right]^{1/2},
\ee
For the remaining part, we have
\begin{align}\label{a_eta_xi}
&|a^{ti}_h(\bm{\eta},\bm{\xi})| \leq\nonumber\\
&+ \alpha\cancel{\Big|\sumintOe{ (\bm{M}:\bm{\varepsilon}(\bm{\eta}))\nabla\cdot\bm{\xi} + \nabla\cdot\bm{\eta}(\bm{M}:\bm{\varepsilon}(\bm{\xi}))}\Big|}
	+ \beta \cancel{\Big|\sumintOe{(\bm{M}:\bm{\varepsilon}(\bm{\eta}))(\bm{M}:\bm{\varepsilon}(\bm{\xi}))}\Big|}\nonumber\\
	&+ \underbrace{2\gamma \Big|\sumintOe{ \bm{\varepsilon}(\bm{\eta}) \bm{M}:\bm{\varepsilon}(\bm{\xi})}\Big|}_{I}
	+ \cancel{\Big|\sumGiD \theta \int_E \lfloor \bm{\eta} \rfloor : \{\bm{\sigma}^{ti}(\bm{\xi})\}\ ds\Big|}\nonumber\\
	&+ \underbrace{\beta \Big|\sumGiD \int_E \lfloor \bm{\xi} \rfloor:\{\bm{M}(\bm{M}:\bm{\varepsilon}(\bm{\eta}))\}\ ds\Big|}_{II}
	+ \underbrace{\alpha \Big|\sumGiD \int_E \lfloor \bm{\xi} \rfloor:\{(\bm{M}:\bm{\varepsilon}(\bm{\eta}))\mathbf{I}+(\nabla\cdot\bm{\eta})\bm{M}\}\ ds\Big|}_{III}\nonumber\\
	&+ \underbrace{\gamma \Big|\sumGiD \int_E \lfloor \bm{\xi} \rfloor:\{\bm{\varepsilon}(\bm{\eta}) \bm{M}+\bm{ M\varepsilon}(\bm{\eta})\}\ ds\Big|}_{IV}
	+\underbrace{k\beta \Big| \sumGiD \dfrac{1}{h_E} \int_E (\bm{M}:\lfloor\bm{\eta}\rfloor) (\bm{M}:\lfloor\bm{\xi}\rfloor)\ ds\Big|}_{V}\nonumber\\
	&+\underbrace{k\alpha \Big| \sumGiD \dfrac{1}{h_E} \int_E[\bm{\eta}](\bm{M}:\lfloor\bm{\xi}\rfloor) + [\bm{\xi}](\bm{M}:\lfloor\bm{\eta}\rfloor)\ ds\Big|}_{VI}
	+\underbrace{k\gamma \Big| \sumGiD \dfrac{1}{h_E} \int_E\lfloor\bm{\xi}\rfloor:(\lfloor\bm{\eta}\rfloor\bm{M}+\bm{M}\lfloor\bm{\eta}\rfloor)\ ds\Big|}_{VII}.
\end{align}
The struck-through terms are zero from the properties of the interpolant.
We now bound each of the remaining terms in \eqref{a_eta_xi}.
\begin{align*}
I &= 2\gamma \Big|\sumintOe{ \bm{\varepsilon}(\bm{\eta}) \bm{M}:\bm{\varepsilon}(\bm{\xi})}\Big|\\
& \leq 2\gamma \left(\sumOe \|\bm{\varepsilon}(\bm{\eta}) \bm{M}\|^2_{0,\Omega_e}\right)^{1/2} \left(\sumOe \|\bm{\varepsilon}(\bm{\xi})\|^2_{0,\Omega_e}\right)^{1/2}\\
& \leq C\gamma \left(\sumOe \|\bm{\varepsilon}(\bm{\eta})\|^2_{0,\Omega_e}\right)^{1/2} \left(\sumOe \|\bm{\varepsilon}(\bm{\xi})\|^2_{0,\Omega_e}\right)^{1/2}\\
& \leq C \gamma \|\bm\xi\|_{DG} \left(\sumOe \|\nabla\bm{\eta}\|^2_{0,\Omega_e}\right)^{1/2}\\
& \leq C \gamma\|\bm\xi\|_{DG} \left(\sumOe |\bm{\eta}|^2_{1,\Omega_e}\right)^{1/2}\\
& \leq C \gamma\|\bm\xi\|_{DG} \left(\sumOe h^2_e |\bm{u}|^2_{2,\Omega_e}\right)^{1/2} \qquad{\color{gray}(\text{using }\eqref{estimate_0})}.
\end{align*}
\begin{align*}
II &= \beta \Big|\sumGiD \int_E \lfloor \bm{\xi} \rfloor:\{\bm{M}(\bm{M}:\bm{\varepsilon}(\bm{\eta}))\}\ ds\Big|\\
& \leq \beta \left(\sumGiD \dfrac{1}{h_E}\|\lfloor \bm{\xi} \rfloor\|^2_{0,E}\right)^{1/2} \left(\sumGiD h_E\|\{\bm{M}(\bm{M}:\bm{\varepsilon}(\bm{\eta}))\}\|^2_{0,E}\right)^{1/2}\\
& \leq \beta \|\bm\xi\|_{DG} \left(\sumGiD h_E\|\{\bm{M}:\bm{\varepsilon}(\bm{\eta})\}\|^2_{0,E}\right)^{1/2}\\
& \leq C \beta \|\bm\xi\|_{DG} \left(\sumOe h_e \|\bm{M}:\bm{\varepsilon}(\bm{\eta})\|^2_{0,\partial\Omega_e^{iD}}\right)^{1/2}\quad{\color{gray}(\text{using }\eqref{bound_b})}\\
& \leq C \beta \|\bm\xi\|_{DG} \left(\sumOe \|\bm{M}:\bm{\varepsilon}(\bm{\eta})\|^2_{0,\Omega_e} + h_e^2 |\bm{M}:\bm{\varepsilon}(\bm{\eta})|_{1,\Omega_e}\right)^{1/2} {\color{gray}(\text{using }\eqref{bound_a}}\\
& \leq C \beta \|\bm\xi\|_{DG} \left(\sumOe h^2_e |\bm{M}:\bm{\varepsilon}(\bm{u})|^2_{1,\Omega_e}\right)^{1/2} \qquad{\color{gray}(\text{using }\eqref{estimate_4} \text{ and } \eqref{estimate_5})}.
\end{align*}
\begin{align*}
III_a := \alpha \Big|\sumGiD \int_E \lfloor \bm{\xi} \rfloor:\{(\bm{M}:\bm{\varepsilon}(\bm{\eta}))\mathbf{I}\}\ ds\Big|
& \leq C \alpha \|\bm\xi\|_{DG} \left(\sumOe h^2_e |\bm{M}:\bm{\varepsilon}(\bm{u})|^2_{1,\Omega_e}\right)^{1/2} \qquad{\color{gray}(\text{similar to } II)}. 
\\
\end{align*}
\begin{align*}
III_b &:= \alpha \Big|\sumGiD \int_E \lfloor \bm{\xi} \rfloor:\{(\nabla\cdot\bm{\eta})\bm{M}\}\ ds\Big|\\
& \leq \alpha \left(\sumGiD \dfrac{1}{h_E}\|\lfloor \bm{\xi} \rfloor\|^2_{0,E}\right)^{1/2} \left(\sumGiD h_E\|\{(\nabla\cdot\bm{\eta})\bm{M}\}\|^2_{0,E}\right)^{1/2}\\
& \leq C \alpha \|\bm\xi\|_{DG} \left(\sumOe h_e \|\nabla\cdot\bm{\eta}\|^2_{0,\partial\Omega_e^{iD}}\right)^{1/2} \qquad{\color{gray}(\text{using }\eqref{bound_b})}\\
& \leq C \alpha \|\bm\xi\|_{DG} \left(\sumOe \|\nabla\cdot\bm{\eta}\|^2_{0,\Omega_e} + h_e^2 |\nabla\cdot\bm{\eta}|^2_{1,\Omega_e}\right)^{1/2} \qquad{\color{gray}(\text{using }\eqref{bound_a})}\\
& \leq C \alpha \|\bm\xi\|_{DG} \left(\sumOe h_e^2|\nabla\cdot\bm{u}|^2_{1,\Omega_e}\right)^{1/2}  \qquad{\color{gray}(\text{using }\eqref{estimate_2} \text{ and }\eqref{estimate_3})}.
\end{align*}
\begin{align*}
IV &= \gamma \Big|\sumGiD \int_E \lfloor \bm{\xi} \rfloor:\{\bm{\varepsilon}(\bm{\eta}) \bm{M}\}\ ds\Big|\\
& \leq \gamma \left(\sumGiD \dfrac{1}{h_E}\|\lfloor \bm{\xi} \rfloor\|^2_{0,E}\right)^{1/2} \left(\sumGiD h_E\|\{\bm{\varepsilon}(\bm{\eta}) \bm{M}\}\|^2_{0,E}\right)^{1/2}\\
& \leq C \gamma\|\bm\xi\|_{DG} \left(\sumOe h^2_e |\bm{u}|^2_{2,\Omega_e}\right)^{1/2} ,
\end{align*}
following steps similar to those in $I$. 
\begin{align*}
V &= k\beta \Big| \sumGiD \dfrac{1}{h_E} \int_E (\bm{M}:\lfloor\bm{\eta}\rfloor) (\bm{M}:\lfloor\bm{\xi}\rfloor)\ ds\Big|\\
& \leq k\beta \left(\sumGiD \dfrac{1}{h_E} \|\bm{M}:\lfloor\bm{\xi}\rfloor\|^2_{0,E}\right)^{1/2} \left(\sumGiD \dfrac{1}{h_E} \|\bm{M}:\lfloor\bm{\eta}\rfloor\|^2_{0,E}\right)^{1/2}\\
& \leq C \beta \|\bm\xi\|_{DG} \left(\sumGiD \dfrac{1}{h_E} \|\lfloor\bm{\eta}\rfloor\|^2_{0,E}\right)^{1/2}\\
& \leq C \beta \|\bm\xi\|_{DG} \left(\sumOe \sum_{E\in\partial\Omega_e^{iD}} \dfrac{2}{h_E} \|\bm{\eta}\|_{0,E}\right)^{1/2} \qquad{\color{gray}(\text{using }\eqref{bound_d})}\\
& \leq C \beta \|\bm\xi\|_{DG} \left(\sumOe h_e^{-2} \|\bm{\eta}\|_{0,\Omega_e} + |\bm{\eta}|_{1,\Omega_e}\right)^{1/2} \qquad{\color{gray}(\text{using }\eqref{bound_a})}\\
& \leq C \beta \|\bm\xi\|_{DG} \left(\sumOe h^2_e |\bm{u}|^2_{2,\Omega_e}\right)^{1/2} \qquad{\color{gray}(\text{using }\eqref{estimate_0})}.
\end{align*}
\begin{align*}
VI_a := k\alpha \Big| \sumGiD \dfrac{1}{h_E} \int_E[\bm{\eta}](\bm{M}:\lfloor\bm{\xi}\rfloor)\ ds \Big|
& \leq C \alpha \|\bm\xi\|_{DG} \left(\sumOe h^2_e |\bm{u}|^2_{2,\Omega_e}\right)^{1/2} \qquad{\color{gray}(\text{similar to } V)}.
\end{align*}
\begin{align*}
VI_b := k\alpha \Big| \sumGiD \dfrac{1}{h_E} \int_E[\bm{\xi}](\bm{M}:\lfloor\bm{\eta}\rfloor)\ ds \Big|
& \leq C \alpha \|\bm\xi\|_{DG} \left(\sumOe h^2_e |\bm{u}|^2_{2,\Omega_e}\right)^{1/2} \qquad{\color{gray}(\text{similar to } V)}.
\end{align*}
\begin{align*}
VII = k \gamma \Big| \sumGiD \dfrac{1}{h_E} \int_E \lfloor\bm{\xi}\rfloor : \lfloor\bm{\eta}\rfloor \bm{M}\ ds \Big|
& \leq C \alpha \|\bm\xi\|_{DG} \left(\sumOe h^2_e |\bm{u}|^2_{2,\Omega_e}\right)^{1/2} \qquad{\color{gray}(\text{similar to } V)}.
\end{align*}
We use these results to bound each term of $|a^{ti}_h(\bm{\eta},\bm{\xi})|$, which leads to
\[
|a^{ti}_h(\bm{\eta},\bm{\xi})| \leq C ||\bm{\xi}||_{DG} \left(\sumOe h^2_e \Big((\alpha^2+\beta^2+\gamma^2)|\bm{u}|^2_{2,\Omega_e} + \alpha^2|\nabla\cdot\bm{u}|^2_{1,\Omega_e} + (\alpha^2+\beta^2) |\bm{M}:\bm\varepsilon(\bm{u})|^2_{1,\Omega_e}\Big)\right)^{1/2}.
\]
Therefore,
\begin{small}
\be\label{xi_DG_norm}
||\bm{\xi}||^2_{DG} \leq \dfrac{C}{K^2} \sumOe h^2_e \Big((\mu_t^2+\alpha^2+\beta^2+\gamma^2)|\bm{u}|^2_{2,\Omega_e} + (\lambda^2+\alpha^2)|\nabla\cdot\bm{u}|^2_{1,\Omega_e} + (\alpha^2+\beta^2) |\bm{M}:\bm\varepsilon(\bm{u})|^2_{1,\Omega_e} \Big),
\ee
\end{small}
where $K$ is the coercivity constant defined by \eqref{NIPG_coercivity_bound} for the NIPG case, and by \eqref{SIPG_coercivity_bound} for the SIPG and IIPG cases.

With \eqref{eta_DG_norm} and \eqref{xi_DG_norm}, the full DG error bound is
\begin{align}
||\bm{e}||^2_{DG}
	&\leq \dfrac{C}{K^2} h^2 \Big((\mu_t^2+\alpha^2+\beta^2+\gamma^2)|\bm{u}|^2_{2,\Omega} + (\lambda^2+\alpha^2)|\nabla\cdot\bm{u}|^2_{1,\Omega} + (\alpha^2+\beta^2) |\bm{M}:\bm\varepsilon(\bm{u})|^2_{1,\Omega}\Big).\label{DGerror}
\end{align}
{\bf Remark.} For the case of isotropy, the error estimate is (see \cite{Beverley})
\be\label{isoerror}
||\bm{e}||^2_{DG} \leq \dfrac{C}{K^2} h^2 \left(\mu_t^2|\bm{u}|^2_{2,\Omega} + \lambda^2|\nabla\cdot\bm{u}|^2_{1,\Omega}\right).
\ee
An a priori estimate due to Brenner \& Sung \cite{Brenner1992} for the case of problems on polygonal domains $\Omega \subset \mathbb{R}^2$ allows the right-hand side of \eqref{isoerror} to be bounded independent of $\lambda$, thus confirming the locking-free behaviour of the DG formulation in the incompressible limit.
A similar estimate for the transversely isotropic problem is not available; however, one would expect that an analogous estimate would allow terms of the form
\be\label{TI_estimate}
(\mu_t^2+\alpha^2+\gamma^2)|\bm{u}|^2_{2,\Omega} + (\lambda^2+\alpha^2)|\nabla\cdot\bm{u}|^2_{1,\Omega} + (\alpha^2+\beta^2) |\bm{M}:\bm\varepsilon(\bm{u})|^2_{1,\Omega}
\ee
to be bounded independent of $\lambda$ and $\beta$.
The presence of $\beta$ in the first term of \eqref{DGerror} suggests that locking may occur in the inextensible limit.
Numerical experiments discussed in Section \ref{Sec:Numerical} will explore these features.

The term that leads to the undesirable $\beta$-dependence in the error bound is term $V$ in \eqref{a_eta_xi}.
To circumvent the $\beta$-dependence, one would need to find a way to modify the formulation in such a way that this term is eliminated.
\section{Under-integration}\label{Sec:UI}
In the context of lowest-order approximations on quadrilaterals (in two dimensions) or hexahedra (in three), the use of under-integration in the numerical implementations is generally equivalent to projecting the integrand in a formulation onto the space of constants (see for example \cite{Arnold1981}). 
The latter is in turn equivalent to a mixed formulation of the problem.
For the case of isotropy, using bilinear elements, the undesirable $\lambda$-dependency of the error bound in the incompressible limit may be circumvented by under-integrating the problematic terms \cite{Beverley}.
The same approach is used here in order to overcome locking in the inextensible limit: the $\beta$-stabilization term $V$ will be under-integrated.

It should be noted that the equivalence between under-integration and a mixed formulation does not carry over to certain situations. 
For example, for problems posed in coordinate systems other than cartesian, in which the volume element $dV := dx\, dy\, dz$ becomes, for cylindrical coordinates, $dV = r\,dr\,d \theta\, dz$, the radial coordinate becomes an obstacle to showing such equivalence. 
Such situations require alternative formulations and analyses of both displacement-based and mixed formulations (cf. \cite{Hughes1987}, Section 4.5), and are not pursued here.

We will adopt a form of the formulation in which the bilinear form is given by \eqref{ah_DG_k} with $k_\mu$ replaced by $2k + k_\mu$ and all other stabilization parameters equal to $k$, assuming $k, k_\mu>0$.

The resulting bilinear form is then
\be\label{ah_DG_new}
\overline{a}_h(\bm{u},\bm{v})
:= a_h(\bm{u},\bm{v}) + \mu_t \sum_{E\in\Gamma_{iD}} \dfrac{k_\mu}{h_E}\int_E \lfloor \bm{u} \rfloor:\lfloor \bm{v} \rfloor\ ds.
\ee
Note that this bilinear form is coercive, as is easily established using Theorem \ref{thm:coercivity}.

If we define by $\Pi_0$ the $\mathcal{L}^2$-orthogonal projection onto the space of constants,
the new DG formulation with under-integration is:
\[
\overline{a}_h^{\scaleto{UI}{4pt}}(\bm{u},\bm{v}) = {l}_h^{\scaleto{UI}{4pt}}(\bm{v})
\]
where
\be\label{ahUI}
\overline{a}_h^{\scaleto{UI}{4pt}}(\bm{u},\bm{v}) = \overline{a}_h(\bm{u},\bm{v})
+ k \beta\sumintGiD{(\bm{M}:\lfloor\bm{u}\rfloor)(\Pi_0-\bm{I})(\bm{M}:\lfloor\bm{v}\rfloor)},
\ee
and
\be\label{lhUI}
{l}_h^{\scaleto{UI}{4pt}}(\bm{v}) = {l}_h(\bm{v})
+ k \beta\sumintG{D}{(\bm{M}:\bm{g}\otimes\bm{n})(\Pi_0-\bm{I})(\bm{M}:\bm{v}\otimes\bm{n})}.
\ee
Note that under-integration of the edge term
$
\int_E \lfloor\cdot\rfloor:\{\bm\sigma(\cdot)\} \;ds
$
is not necessary, since for any $\bm{u} \in [\mathcal{P}_1(\Omega_e)]^d$ the integrand is linear, so that one-point integration is exact.
The integrand in the term $V$ in \eqref{a_eta_xi} is then replaced with its projected quantity, and is easily shown to be zero.
However, this has involved modification of the DG formulation itself, so that it is necessary to show coercivity and consistency of the modified bilinear form.
\subsection{Coercivity}
Each IP method will be investigated separately.
\paragraph{NIPG $(\theta = 1)$}
We have
\begin{align}
\overline{a}_h^{\scaleto{UI}{4pt}}(\bm{v},\bm{v})
=& \sumintOe{\mathbb{C}\bm\varepsilon(\bm{v}):\bm\varepsilon(\bm{v})}
	+ \sum_{E\in\Gamma_{iD}} \dfrac{k}{h_E}\int_E\mathbb{C} \lfloor\bm{v}\rfloor : \lfloor\bm{v}\rfloor\ ds
	+ \mu_t \sum_{E\in\Gamma_{iD}} \dfrac{k_\mu}{h_E}\int_E |\lfloor \bm{v} \rfloor|^2\ ds\nonumber\\
	&+ \beta \sum_{E\in\Gamma_{iD}} \dfrac{k}{h_E}\int_E (\Pi_0-\bm{I}) \big(\bm{M}:\lfloor\bm{v}\rfloor\big) \big(\bm{M}:\lfloor\bm{v}\rfloor\big)\ ds\nonumber\\
=& {a}_h(\bm{v},\bm{v})
	+ \mu_t \sum_{E\in\Gamma_{iD}} \dfrac{k_\mu}{h_E}\int_E |\lfloor \bm{v} \rfloor|^2\ ds
	+ \beta \sum_{E\in\Gamma_{iD}} \dfrac{k}{h_E}\int_E (\Pi_0-\bm{I}) \big(\bm{M}:\lfloor\bm{v}\rfloor\big) \big(\bm{M}:\lfloor\bm{v}\rfloor\big)\ ds,\label{ah_UI_vec}
\end{align}
noting that ${a}_h(\bm{v},\bm{v})$ is defined as in \eqref{NIPG_coerc}.

We define
\be\label{B}
\mathscr{B} := \mu_t \sum_{E\in\Gamma_{iD}} \dfrac{k_\mu}{h_E}\int_E|\lfloor \bm{v} \rfloor|^2\ ds
	+ \beta \sum_{E\in\Gamma_{iD}} \dfrac{k}{h_E}\int_E (\Pi_0-\bm{I}) \big(\bm{M}:\lfloor\bm{v}\rfloor\big) \big(\bm{M}:\lfloor\bm{v}\rfloor\big)\ ds.
\ee
For ease, we set $\bm{m} = \bm{v}_i - \bm{v}_e$ and denote by $\bm{n}$ the outward unit normal vector, giving
\begin{align*}
|\lfloor \bm{v} \rfloor|^2 &= |\bm{m}\otimes \bm{n}|^2 = \bm{m}\cdot\bm{m},\\
\bm{M}:\lfloor \bm{v} \rfloor &= (\bm{a}\otimes\bm{a}):(\bm{m}\otimes \bm{n}) = (\bm{a}\cdot\bm{m})(\bm{a}\cdot\bm{n}).
\end{align*}
Then
\be\label{B_vec}
\mathscr{B} = \mu_t \sum_{E\in\Gamma_{iD}} \dfrac{k_\mu}{h_E}\int_E \bm{m}\cdot\bm{m}\ ds
	+ \beta \sum_{E\in\Gamma_{iD}} \dfrac{k}{h_E}\int_E \Big((\Pi_0-\bm{I}) (\bm{a}\cdot\bm{m})\Big)^2 (\bm{a}\cdot\bm{n})^2\ ds.
\ee
We have
\begin{align*}
\bm{m}\cdot\bm{m} &\geq (\bm{a}\cdot\bm{m})^2\\
&\geq (\bm{a}\cdot\bm{m})^2 (\bm{a}\cdot\bm{n})^2 \qquad{\color{gray}(\text{since }(\bm{a}\cdot\bm{n})^2\leq 1)}\\
&= (\bm{a}\cdot\bm{n})^2 \left(\dfrac{1}{2}(\bm{a}\cdot\bm{m})^2 + \dfrac{1}{2}(\bm{a}\cdot\bm{m})^2\right).
\end{align*}
Noting that
\[
\int_E\Big(\Pi_0(\bullet)\Big)^2 \,ds\leq \int_E\bullet^2\,ds,
\]
we then have
\be\label{bound_vec}
\int_E\bm{m}\cdot\bm{m} \,ds \geq \int_E(\bm{a}\cdot\bm{n})^2 \left(\dfrac{1}{2}(\bm{a}\cdot\bm{m})^2 + \dfrac{1}{2}\Big(\Pi_0(\bm{a}\cdot\bm{m})\Big)^2\right) \,ds.
\ee
Returning to \eqref{B_vec}, using \eqref{bound_vec} we obtain
\[
\mathscr{B} \geq \sum_{E\in\Gamma_{iD}} \dfrac{1}{h_E}\int_E (\bm{a}\cdot\bm{n})^2
\left[ \left(\dfrac{k_\mu\mu_t}{2} - k\beta\right) (\bm{a}\cdot\bm{m})^2 + \left(\dfrac{k_\mu\mu_t}{2} + k\beta\right) \Big(\Pi_0(\bm{a}\cdot\bm{m})\Big)^2\right] \,ds.
\]
The term on the right-hand-side is non-negative if
\be\label{k_choice}
\begin{dcases}
\dfrac{k_\mu\mu}{2} - k\beta \geq 0,\\
\dfrac{k_\mu\mu}{2} + k\beta \geq 0,
\end{dcases}
\quad \Leftrightarrow \quad
\dfrac{2k|\beta|}{\mu_t} \leq k_\mu.
\ee
Returning to \eqref{ah_UI_vec}, by choosing $k$ and $k_\mu$ as in \eqref{k_choice}, we have
\[
\overline{a}_h^{\scaleto{UI}{4pt}}(\bm{v},\bm{v})
\geq {a}_h(\bm{v},\bm{v}).
\]
From \eqref{ah_NIPG}, we have
\[
\overline{a}_h^{\scaleto{UI}{4pt}}(\bm{v},\bm{v}) \geq K \DGnorm{\bm{v}}^2,
\]
with
\be\label{NIPG_UI_coercivity_bound}
K = \Lambda_{min} \min \big\{1, 2k\big\}.
\ee
Thus, the under-integrated NIPG formulation is coercive.
%
\paragraph{SIPG $(\theta = -1)$}
We have
\begin{align*}
\overline{a}_h^{\scaleto{UI}{4pt}}(\bm{v},\bm{v})
=& \sumintOe{\mathbb{C}\bm\varepsilon(\bm{v}):\bm\varepsilon(\bm{v})}
	-2 \sum_{E\in \Gamma_{iD}}\int_E \lfloor\bm{v}\rfloor:\{\mathbb{C}\bm\varepsilon(\bm{v})\}\;ds 
	+ \sum_{E\in\Gamma_{iD}} \dfrac{k}{h_E}\int_E\mathbb{C} \lfloor\bm{v}\rfloor : \lfloor\bm{v}\rfloor\ ds\\
	&+ \mu_t \sum_{E\in\Gamma_{iD}} \dfrac{k_\mu}{h_E}\int_E |\lfloor \bm{v} \rfloor|^2\ ds
	+ \beta \sum_{E\in\Gamma_{iD}} \dfrac{k}{h_E}\int_E (\Pi_0-\bm{I}) \big(\bm{M}:\lfloor\bm{v}\rfloor\big) \big(\bm{M}:\lfloor\bm{v}\rfloor\big)\ ds\\
	=& a_h(\bm{v},\bm{v}) + \mathscr{B},
\end{align*}
note that ${a}_h(\bm{v},\bm{v})$ is defined as in \eqref{SIPG_coerc}, and $\mathscr{B}$ as in \eqref{B}.

From the NIPG coercivity proof above, for $k$ and $k_\mu$ that satisfy \eqref{k_choice}, we have
\[
\overline{a}_h^{\scaleto{UI}{4pt}}(\bm{v},\bm{v})
\geq {a}_h(\bm{v},\bm{v}).
\]
From \eqref{ah_SIPG}, we have:
\[
\overline{a}_h^{\scaleto{UI}{4pt}}(\bm{v},\bm{v}) \geq K \DGnorm{\bm{v}}^2,
\]
with
\be\label{SIPG_UI_coercivity_bound}
K = m\Lambda_{min},
\ee
such that $0<m<1$ and $k \geq \frac{m}{2} + \frac{C}{1-m}$, for a positive constant $C$ to be determined.
%
\paragraph{IIPG $(\theta = 0)$}
The proof of coercivity for the case of IIPG with under-integration is identical to that for the case of SIPG with under-integration up to a constant.
\begin{theorem}
The bilinear functional $\overline{a}_h^{\scaleto{UI}{4pt}}(\cdot,\cdot)$ defined in \eqref{ahUI} is coercive if, for $k,k_\mu>0$
\begin{enumerate}[label=(\alph*)]
\item when $\theta = 1$,
\[
\dfrac{2k|\beta|}{\mu_t} \leq k_\mu;
\]
\item when $\theta \in \{-1,0\}$,
\[
\dfrac{2k|\beta|}{\mu_t} \leq k_\mu, \quad \text{and} \quad k\geq\dfrac{m}{2} + \dfrac{C}{1-m},
\]
where $C$ is a positive constant to be calculated, and $0<m<1$.
\end{enumerate}
\end{theorem}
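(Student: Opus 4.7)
The plan is to reduce each of the three cases to the analogous NIPG/SIPG/IIPG coercivity result for the unmodified bilinear form $a_h(\cdot,\cdot)$ already established in Theorem~\ref{thm:coercivity}. The natural first step is to rewrite, for any $\bm{v}\in\mathcal{V}_{\scaleto{DG}{4pt}}^h$,
\[
\overline{a}_h^{\scaleto{UI}{4pt}}(\bm{v},\bm{v}) \;=\; a_h(\bm{v},\bm{v}) + \mathscr{B},
\]
where $\mathscr{B}$ collects the extra $k_\mu$-term and the under-integrated $\beta$-term, exactly as in \eqref{B}. Since coercivity of $a_h$ is already in hand under the stated constraints on $k$ and $\theta$, the entire problem collapses to showing the sign condition $\mathscr{B}\ge 0$ under the hypothesis $2k|\beta|/\mu_t \le k_\mu$.

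To verify $\mathscr{B}\ge 0$, I would set $\bm{m}=\bm{v}_i-\bm{v}_e$ so that $|\lfloor\bm{v}\rfloor|^2 = \bm{m}\cdot\bm{m}$ and $\bm{M}:\lfloor\bm{v}\rfloor = (\bm{a}\cdot\bm{m})(\bm{a}\cdot\bm{n})$, reducing $\mathscr{B}$ to a scalar integral. Two elementary ingredients then do the work: the pointwise estimate $\bm{m}\cdot\bm{m}\ge(\bm{a}\cdot\bm{m})^2\ge(\bm{a}\cdot\bm{m})^2(\bm{a}\cdot\bm{n})^2$ (using $|\bm{a}|=|\bm{n}|=1$), together with the contractivity of the $L^2$-projection $\int_E (\Pi_0 f)^2\,ds\le\int_E f^2\,ds$. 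Splitting $(\bm{a}\cdot\bm{m})^2=\tfrac12(\bm{a}\cdot\bm{m})^2+\tfrac12(\bm{a}\cdot\bm{m})^2$ and using the projection bound on one copy yields
\[
\mathscr{B} \;\ge\; \sum_{E\in\Gamma_{iD}}\frac{1}{h_E}\int_E (\bm{a}\cdot\bm{n})^2\left[\left(\frac{k_\mu\mu_t}{2}-k\beta\right)(\bm{a}\cdot\bm{m})^2 + \left(\frac{k_\mu\mu_t}{2}+k\beta\right)\bigl(\Pi_0(\bm{a}\cdot\bm{m})\bigr)^2\right]ds,
\]
and both coefficients are non-negative precisely when $2k|\beta|/\mu_t\le k_\mu$, which is the hypothesis. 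This delivers $\overline{a}_h^{\scaleto{UI}{4pt}}(\bm{v},\bm{v})\ge a_h(\bm{v},\bm{v})$.

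With the inequality $\overline{a}_h^{\scaleto{UI}{4pt}}(\bm{v},\bm{v})\ge a_h(\bm{v},\bm{v})$ in place, case (a) follows immediately by invoking the NIPG bound \eqref{ah_NIPG}, which requires only $k>0$; the resulting coercivity constant is $K=\Lambda_{\min}\min\{1,2k\}$. For case (b), both SIPG and IIPG require the additional restriction $k\ge \tfrac{m}{2}+\tfrac{C}{1-m}$ with $0<m<1$ imported from the proof of Theorem~\ref{thm:coercivity}, giving $K=m\Lambda_{\min}$; the IIPG argument is identical up to a factor $2$ in front of the consistency edge term, so it needs no new work.

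The one delicate step is the under-integration estimate for $\mathscr{B}$: since $\beta$ can in principle have either sign (although it is physically positive in the near-inextensible regime), one must handle both signs uniformly, which is precisely why the absolute value $|\beta|$ appears in the hypothesis. The symmetric split combined with the projection contractivity is what absorbs the indefinite $\beta$-contribution into the positive $k_\mu\mu_t$ stabilization. Everything else is essentially bookkeeping against results already proved.
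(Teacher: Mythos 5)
Your proposal is correct and follows essentially the same route as the paper: the decomposition $\overline{a}_h^{\scaleto{UI}{4pt}}(\bm{v},\bm{v}) = a_h(\bm{v},\bm{v}) + \mathscr{B}$, the reduction of $\mathscr{B}$ to edge integrals via $\bm{m}=\bm{v}_i-\bm{v}_e$, the pointwise bound $\bm{m}\cdot\bm{m}\ge(\bm{a}\cdot\bm{m})^2(\bm{a}\cdot\bm{n})^2$ combined with the symmetric split and the $\mathcal{L}^2$-projection contractivity, and the final appeal to Theorem \ref{thm:coercivity} for the three values of $\theta$ are exactly the paper's argument, including the role of $|\beta|$ in absorbing either sign.
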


\subsection{Consistency}
With the continuous exact solution $\bm{u} \in [\mathcal{H}^2(\Omega)]^d$ satisfying the properties given in Section \ref{sec:consistency}, we have
\begin{align*}
\bar{a}_h^{\scaleto{UI}{4pt}}(\bm{u},\bm{v}) - l_h^{\scaleto{UI}{4pt}}(\bm{v})
=& \sumintOe{\bm\sigma(\bm{u}):\bm\varepsilon(\bm{v})}
- \sumGiD \int_E \lfloor\bm{v}\rfloor:\{\bm\sigma(\bm{u})\} \;ds
- \sumintOe{\bm{f}\cdot\bm{v}}\\
&- \sum_{E\in\Gamma_N} \int_E \bm{h}\cdot\bm{v} \;ds.
\end{align*}
Since $\bm{u}$ satisfies the weak form,
\begin{align*}
\sumintOe{\bm\sigma(\bm{u}):\bm\varepsilon(\bm{v})}
&= \sumintOe{\bm{f}\cdot\bm{v}} + \sum_{\Omega_e\in\mathcal{T}_h} \int_{\partial\Omega_e} \bm{v}\otimes\bm{n} : \bm\sigma(\bm{v})\\
&= \sumintOe{\bm{f}\cdot\bm{v}} + \sum_{E\in \Gamma}\int_E \lfloor\bm{v}\rfloor:\{\bm\sigma(\bm{u})\} + \sum_{E\in\Gamma_{int}} \int_E \lfloor\bm\sigma(\bm{u})\rfloor\cdot\{\bm{v}\}\\
&= \sumintOe{\bm{f}\cdot\bm{v}} + \sum_{E\in \Gamma_{iD}}\int_E \lfloor\bm{v}\rfloor:\{\bm\sigma(\bm{u})\} +  \sum_{E\in\Gamma_N} \int_E \bm{t}\cdot\bm{v} \;ds.
\end{align*}
Therefore,
\[
\bar{a}_h^{\scaleto{UI}{4pt}}(\bm{u},\bm{v}) - l_h^{\scaleto{UI}{4pt}}(\bm{v}) = 0
\]
as desired.

\paragraph{Error bound}
The approximation error is now bounded 
\be\label{DGUI_error}
||\bm{e}||^2_{DG} \leq \dfrac{C}{K^2} h^2 \Big((\mu_t^2+\alpha^2 + \gamma^2)|\bm{u}|^2_{2,\Omega} + (\lambda^2+\alpha^2)|\nabla\cdot\bm{u}|^2_{1,\Omega} + (\alpha^2+\beta^2) |\bm{M}:\bm\varepsilon(\bm{u})|^2_{1,\Omega}\Big),
\ee
where $K$ is the coercivity constant defined by \eqref{NIPG_UI_coercivity_bound} for the NIPG case, and by \eqref{SIPG_UI_coercivity_bound} for the SIPG and IIPG cases.
The first term on the right-hand side is now independent of $\beta$.
For the case of isotropy, the following uniform estimate is proposed by Brenner \& Sung in \cite{Brenner1992}:
\be\label{B&S}
\|\bm{u}\|_{2,\Omega} + \lambda \|\nabla\cdot\bm{u}\|_{1,\Omega} \leq C \left(\|\bm{f}\|_{0,\Omega}+\|\bm{g}\|_{0,\Gamma_D}\right).
\ee
The bound \eqref{DGUI_error} is thus in a form that would be expected to lead to a uniform estimate, by analogy with the bound \eqref{B&S}.
The behaviour of the under-integrated DG formulation will be explored further in the next section.
\section{Numerical tests}\label{Sec:Numerical}
In this section, we present the results of numerical simulations of two model problems to illustrate the formulations discussed in the preceding sections.
All examples are under conditions of plane strain and based on three- and six-noded triangular elements with standard linear and quadratic interpolations of the displacement field.
We fix the value of the two Poisson's ratios to be equal, $\nu_t = \nu_l = \nu$, and also set $\mu_l = \mu_t$, as defined in \eqref{material_parameters_SC}.
For general states of stress in plane strain, incompressible behaviour occurs only for the case $p=1$ and $\nu = 1/2$.
To investigate the behaviour at near-incompressibility, we choose $\nu = 0.49995$.
For conditions \eqref{psc_2} and \eqref{psc_3} to be satisfied, we will require $p \geq 1$ for this case.
Recall that inextensible behaviour occurs for the case $p \rightarrow \infty$.
The behaviour at near-inextensibility is investigated by choosing higher values of $p \,\,(\geq 10)$.

Within the constraints of the coercivity requirements, the following values for the stabilization parameters are used for all the methods:
$k_\mu = k_\alpha = k_\gamma = 10$, and $k_\lambda = k_\beta = 100$.

Define $\hat{a} := \widehat{(Ox,\bm{a})}$, the angle between the $x$-axis and the fibre direction $\bm{a}$.
For each problem, we consider values for $\hat{a}$ in the range $0 \leq \hat{a} \leq \pi$.

Tip displacement results are shown for only one refinement level in each problem case.
Error results are shown on a sequence of meshes at increased refinement levels.

The results in the examples that follow are for the following element choices:
\begin{table}[h!]
\begin{tabular}{lp{12cm}}
$Exact$ & The analytical solution\\
$P_1\_CG$ & The standard displacement formulation of order 1\\
$P_2\_CG$ & The standard displacement formulation of order 2\\
$P_1\_NIPG$ & The nonsymmetric interior penalty method of order 1\\
$P_1\_SIPG$ & The symmetric interior penalty method of order 1\\
$P_1\_IIPG$ & The incomplete interior penalty method of order 1\\
$P_1\_NIPG\_UI_{\beta}$ & The nonsymmetric interior penalty method of order 1 with under-integration of the $\beta$- stabilization term\\
$P_1\_SIPG\_UI_{\beta}$ & The symmetric interior penalty method of order 1 with under-integration of the $\beta$- stabilization term\\
$P_1\_IIPG\_UI_{\beta}$ & The incomplete interior penalty method of order 1 with under-integration of the $\beta$- stabilization term
\end{tabular}
\end{table}
%
\subsection{Cook's membrane}
The Cook's membrane test consists of a tapered panel fixed along one edge and subject to a shearing load at the opposite edge as depicted in Figure \ref{fig:cooks}.
The applied traction is $t=100$ and $E_t = 250$.
This test problem has no analytical solution.
The vertical tip displacement at corner $C$ is measured, meshing with $2\times 32\times 32$ elements.
\begin{figure}[H]
\centering
\includegraphics[trim={2cm 8cm 2cm 8cm},clip,width=.5\columnwidth]{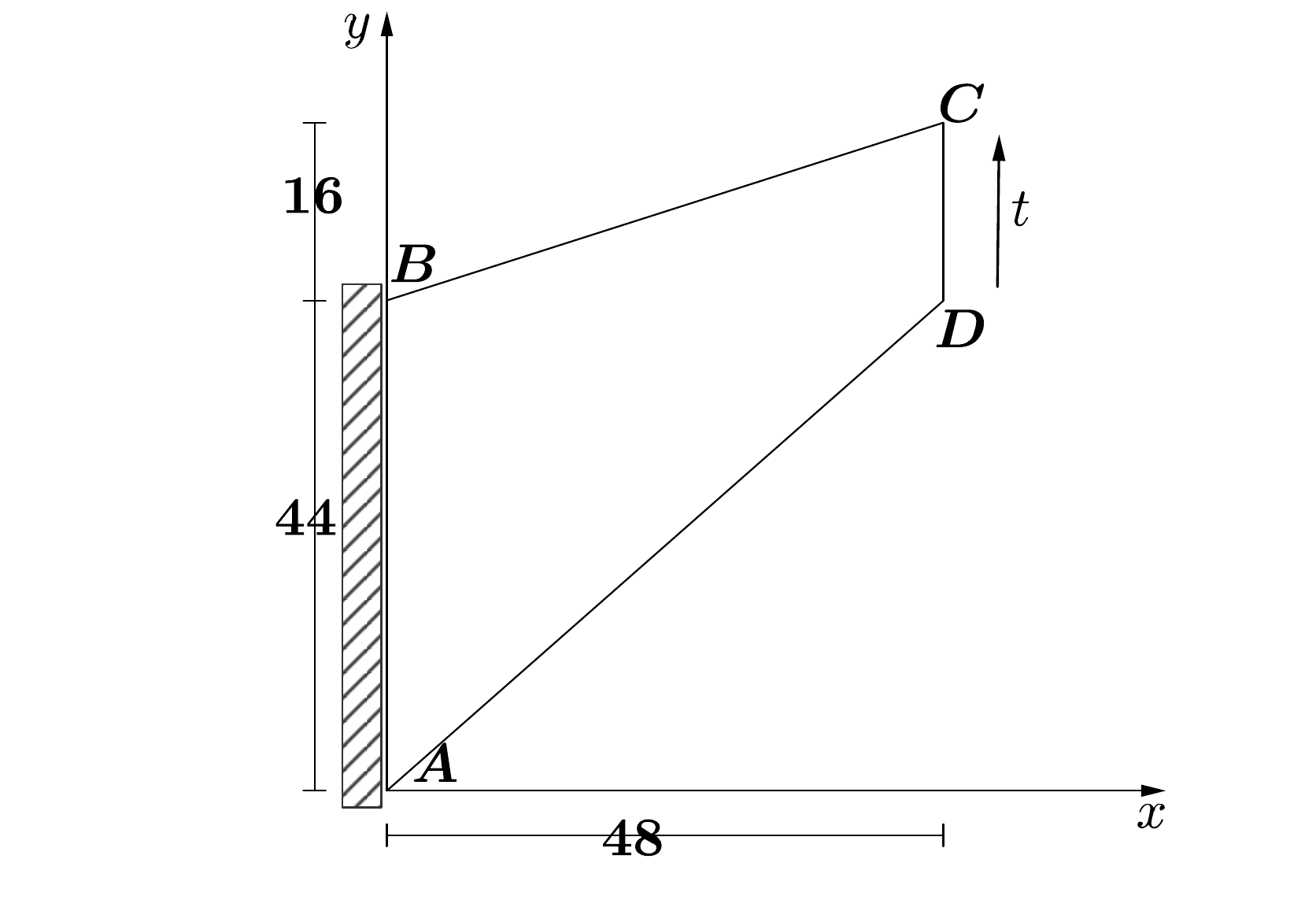}
\caption{Cook's membrane geometry and boundary conditions}
\label{fig:cooks}
\end{figure}
\begin{figure}[h!]
\centering
\subfloat[Moderate values of $p$]{\includegraphics[width=.43\columnwidth]{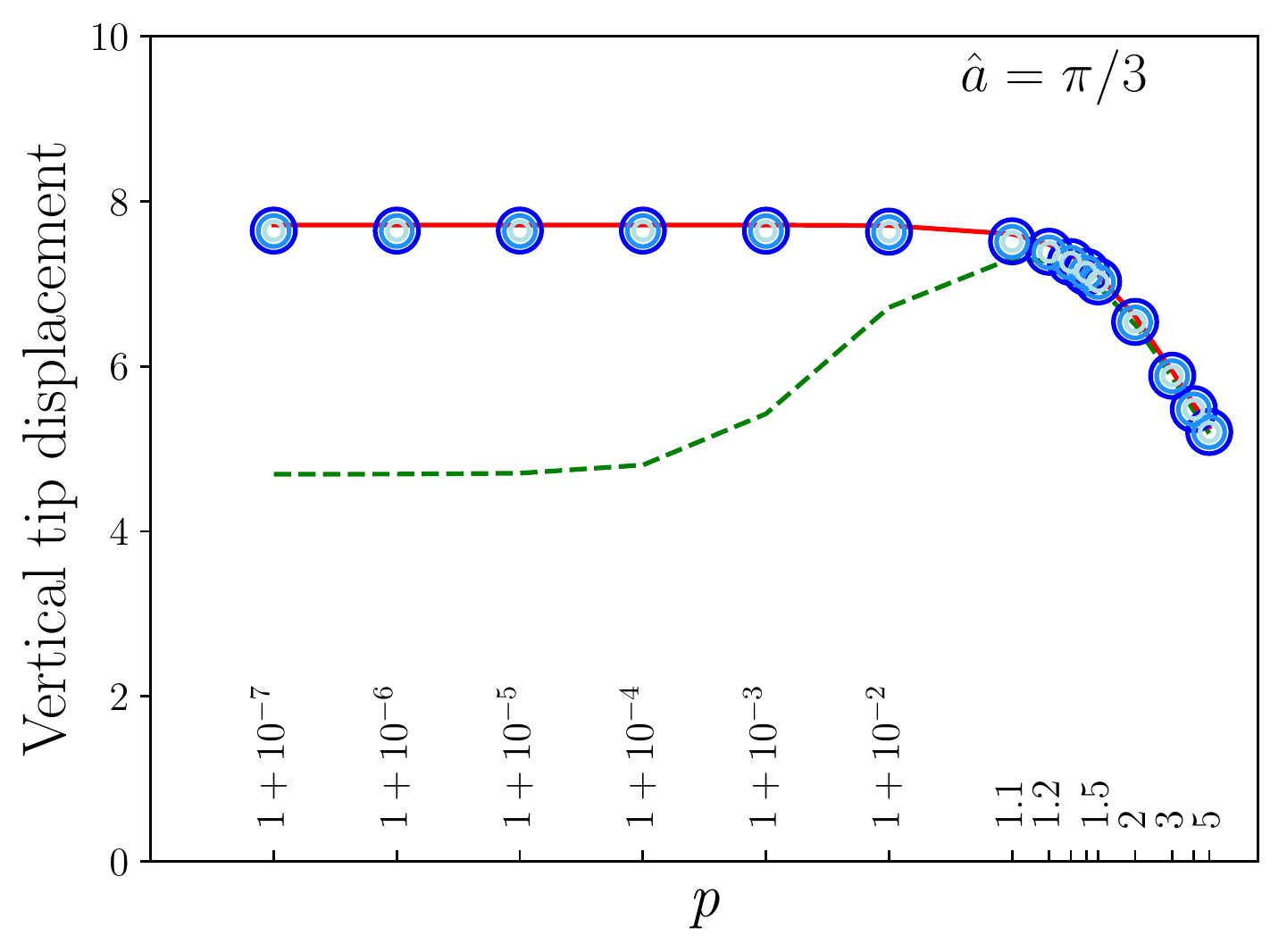}\label{fig:CM_pi3}
								 \includegraphics[width=.57\columnwidth]{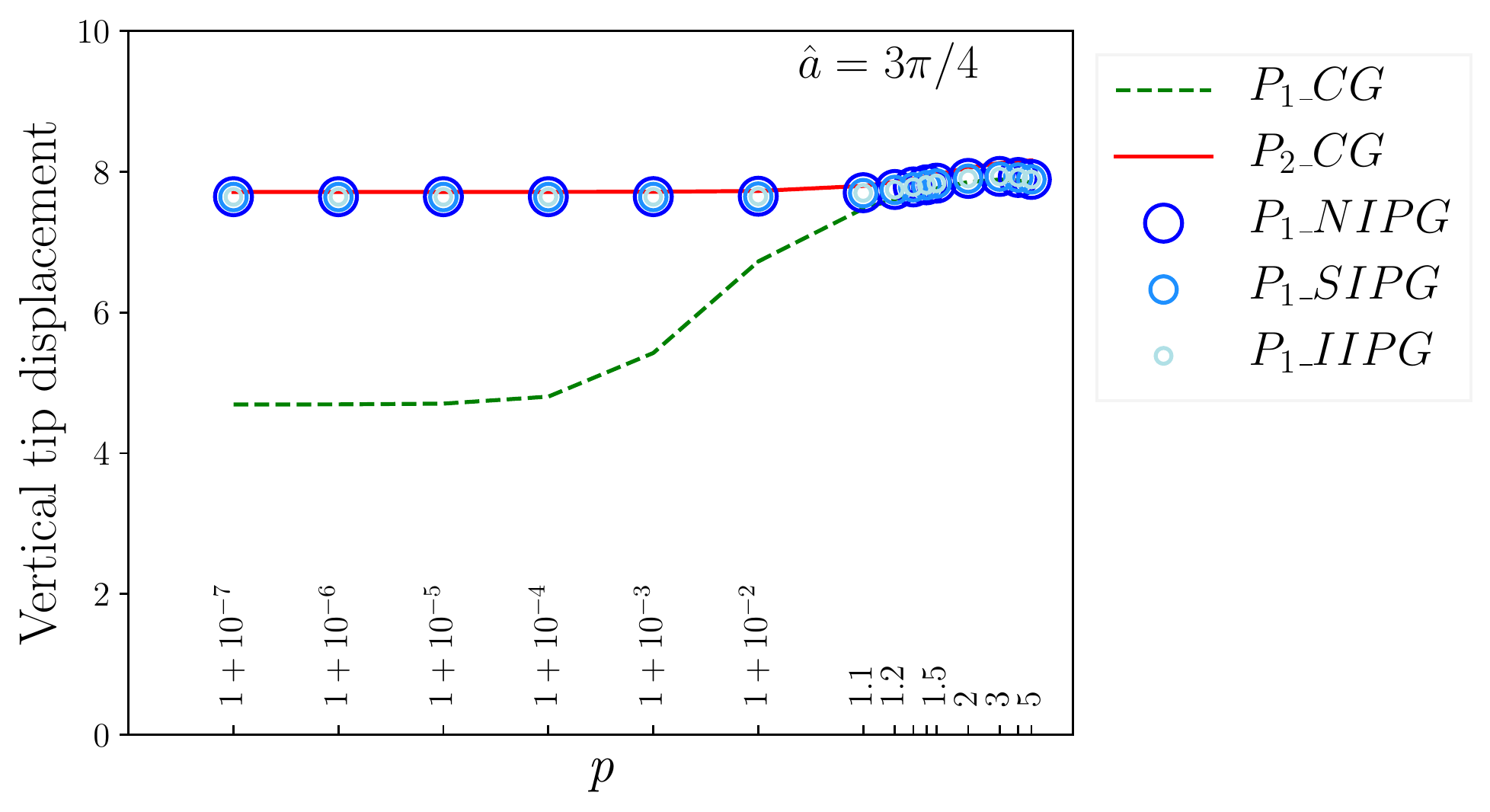}\label{fig:CM_3pi4}}\\
\subfloat[High values of $p$]{\includegraphics[width=.43\columnwidth]{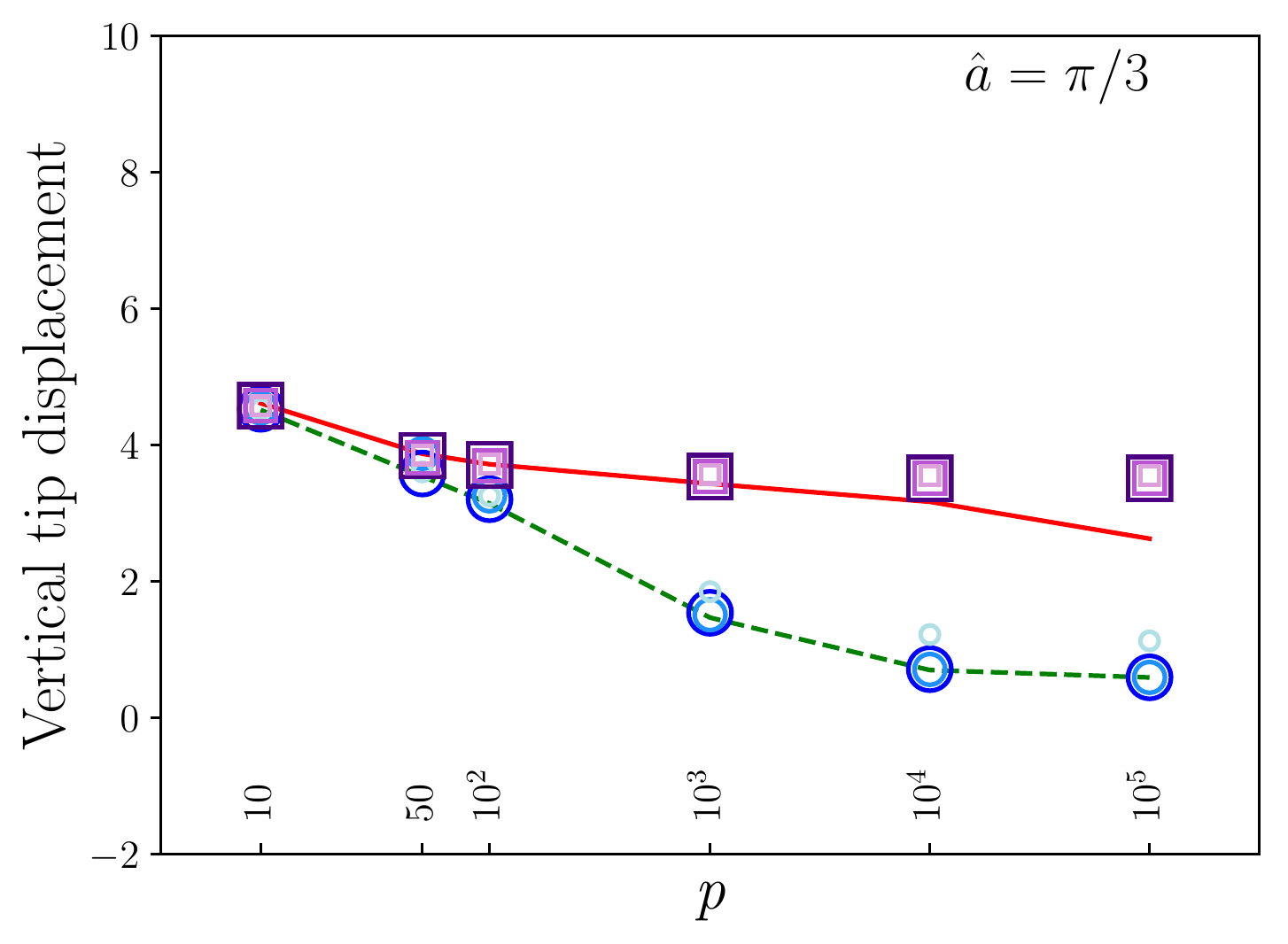}\label{fig:CM_big_pi3}
							 \includegraphics[width=.61\columnwidth]{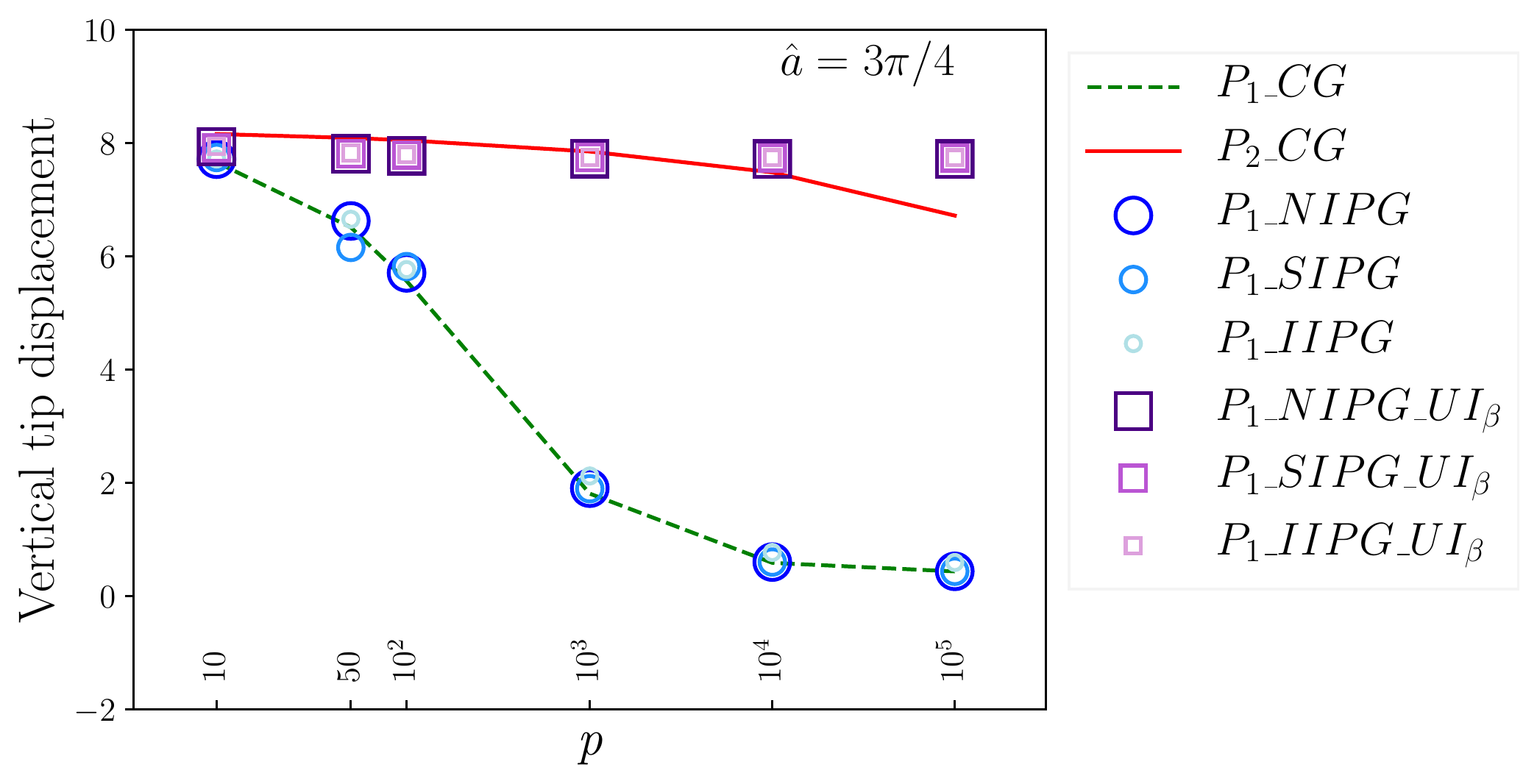}\label{fig:CM_big_3pi4}}
\caption{Tip displacement vs $p$ for the Cook's membrane problem}
\label{fig:CM_tip_disp_log}
\end{figure}

Figure \ref{fig:CM_tip_disp_log} shows semilog plots of the tip displacement vs $p$ for angles $\pi/3$ and $3\pi/4$ for the various element choices,
and for moderate and high values of $p$.
To investigate locking of the proposed formulation, we compare the results with the results obtained using the standard $P_2$-element.

In Figure \ref{fig:CM_pi3}, for moderate values of $p\; (1\leq p \leq 5)$ the $P_1\_CG$ formulation behaves well away from $p=1$, with evidence of locking behaviour as $p\rightarrow 1$.
All three IPDG methods show no locking.
Under-integrated IPDG methods are not necessary here since they give same results.
In Figure \ref{fig:CM_big_pi3}, for higher values of $p\; (10\leq p \leq 10^5)$, the $P_1\_CG$ and all three IPDG methods show locking behaviour as $p$ gets bigger.
Locking is avoided when the $\beta$-stabilization term is under-integrated.
\begin{figure}[h!]
\centering
\includegraphics[width=0.85\columnwidth]{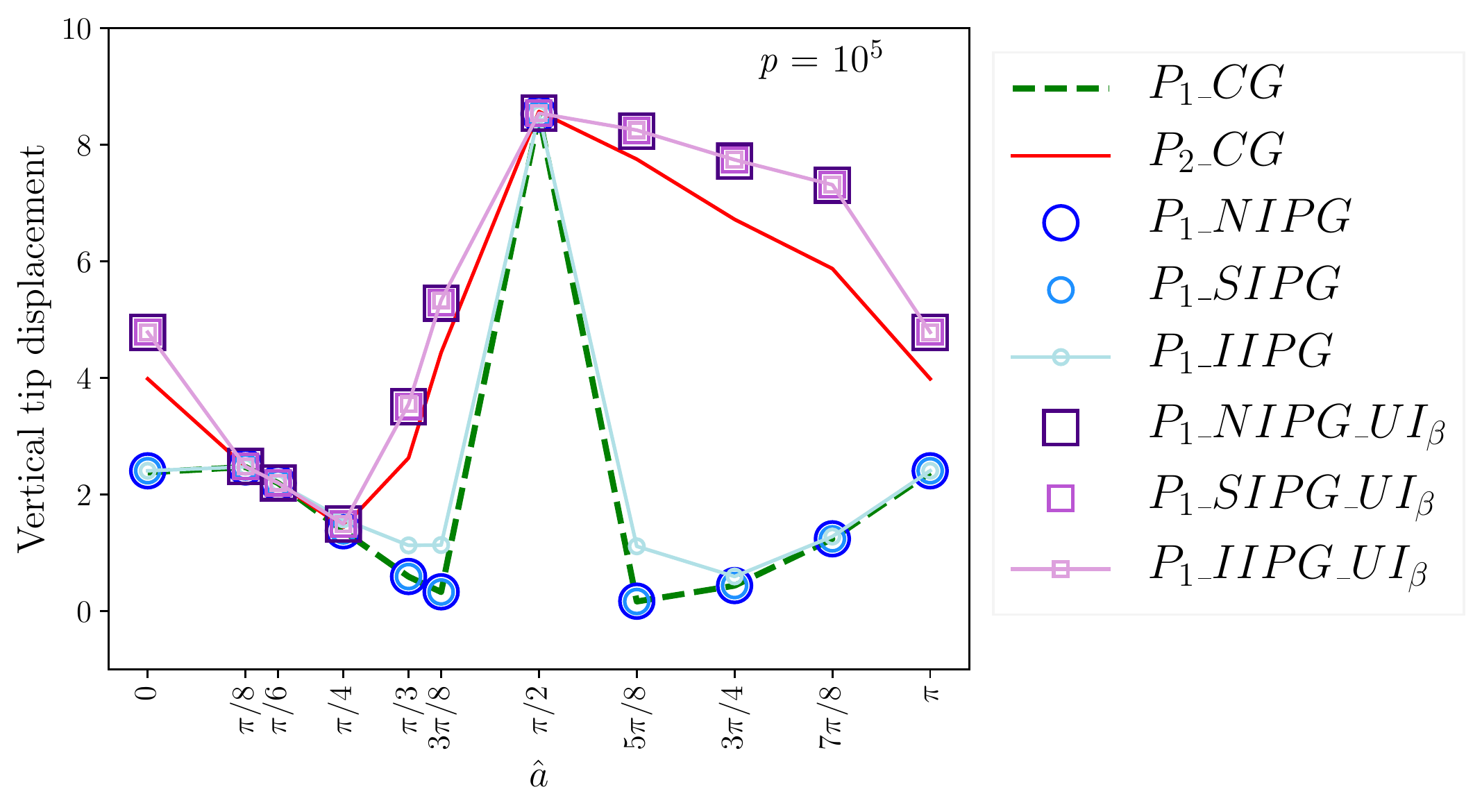}
\caption{Tip displacement for Cook's membrane problem measured at different fibre orientations, for $p=10^5$}
\label{fig:CM_orientation}
\end{figure}
Figure \ref{fig:CM_orientation} shows tip displacements for various fibre orientations, where the degree of anisotropy is fixed at $p=10^5$.
Some deterioration in accuracy is observed for the conforming $P_2$-element, for angles in the range $\hat{a} > \pi/2$.
Whether this indicates mild locking would depend on further parameter-explicit analysis, which is currently absent for the transversely isotropic problem at near-inextensibility.
%
\subsection{Bending of a beam}
\begin{figure}[H]
\centering
\includegraphics[trim={4cm 10cm 0.5cm 11cm},clip,width=.5\columnwidth]{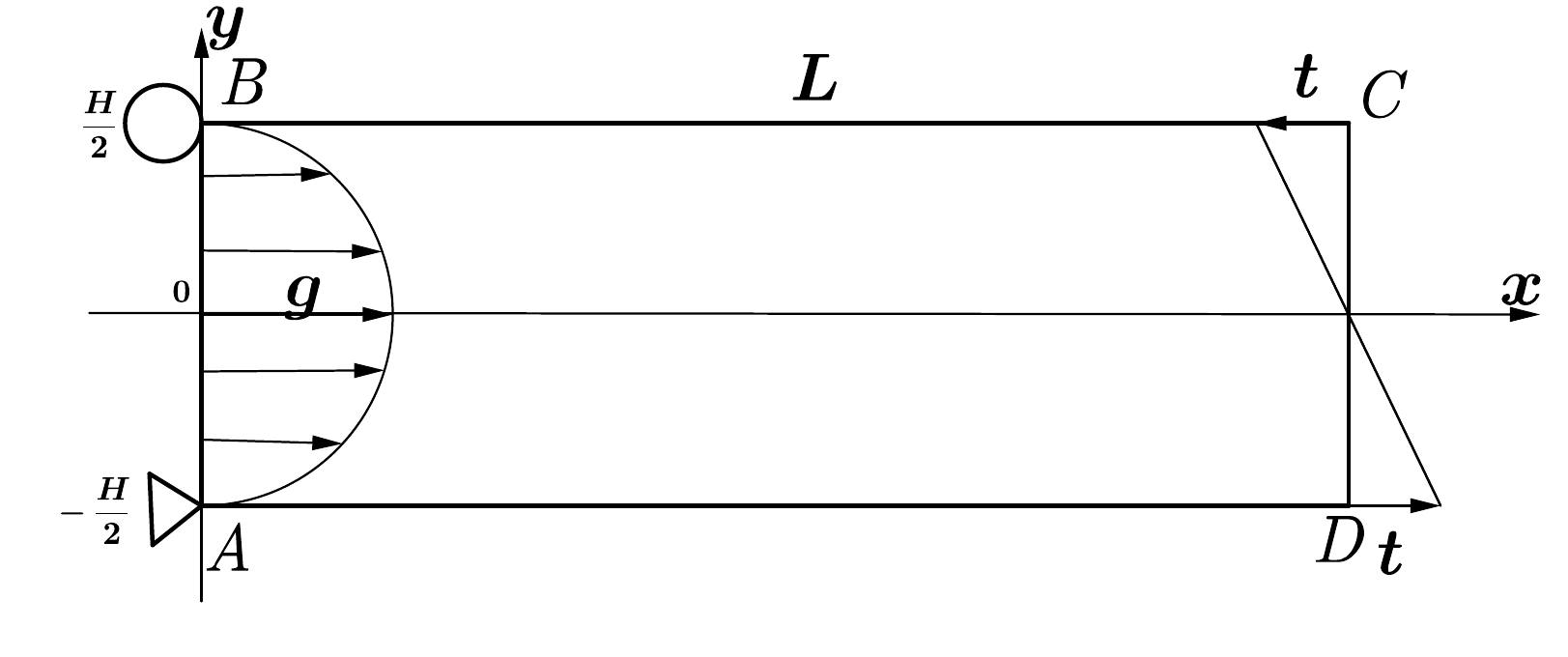}
\caption{Beam geometry and boundary conditions}
\label{fig:bending}
\end{figure}
We consider the beam shown in Figure \ref{fig:bending}, subject to a linearly varying load along the edge CD.
The horizontal displacement $u$ is constrained along A - B, while at A the constraint is in both directions.
The beam has length $L = 10$ and height $H = 2$ and the linearly varying load has a maximum value of $t = 3000$.
Here, $E_t = 1500$.
The boundary conditions are
\[
\begin{dcases}
u(0,y) = g(y),\\
v(0,-\mbox{$\frac{H}{2}$}) = 0,
\end{dcases}
\]
where
\[
g(y) = -\dfrac{t}{H}\mathbb{S}_{31} \left(y^2 - \dfrac{H^2}{4}\right).
\]
The compliance coefficients $\mathbb{S}_{ij}$ are given in \cite{RGR}, as is the analytical solution.
We measure the vertical tip displacement at corner C with meshing $2 \times 80 \times 16$ elements.

\begin{figure}[h!]
\centering
\subfloat[Moderate values of $p$]{\includegraphics[width=.43\columnwidth]{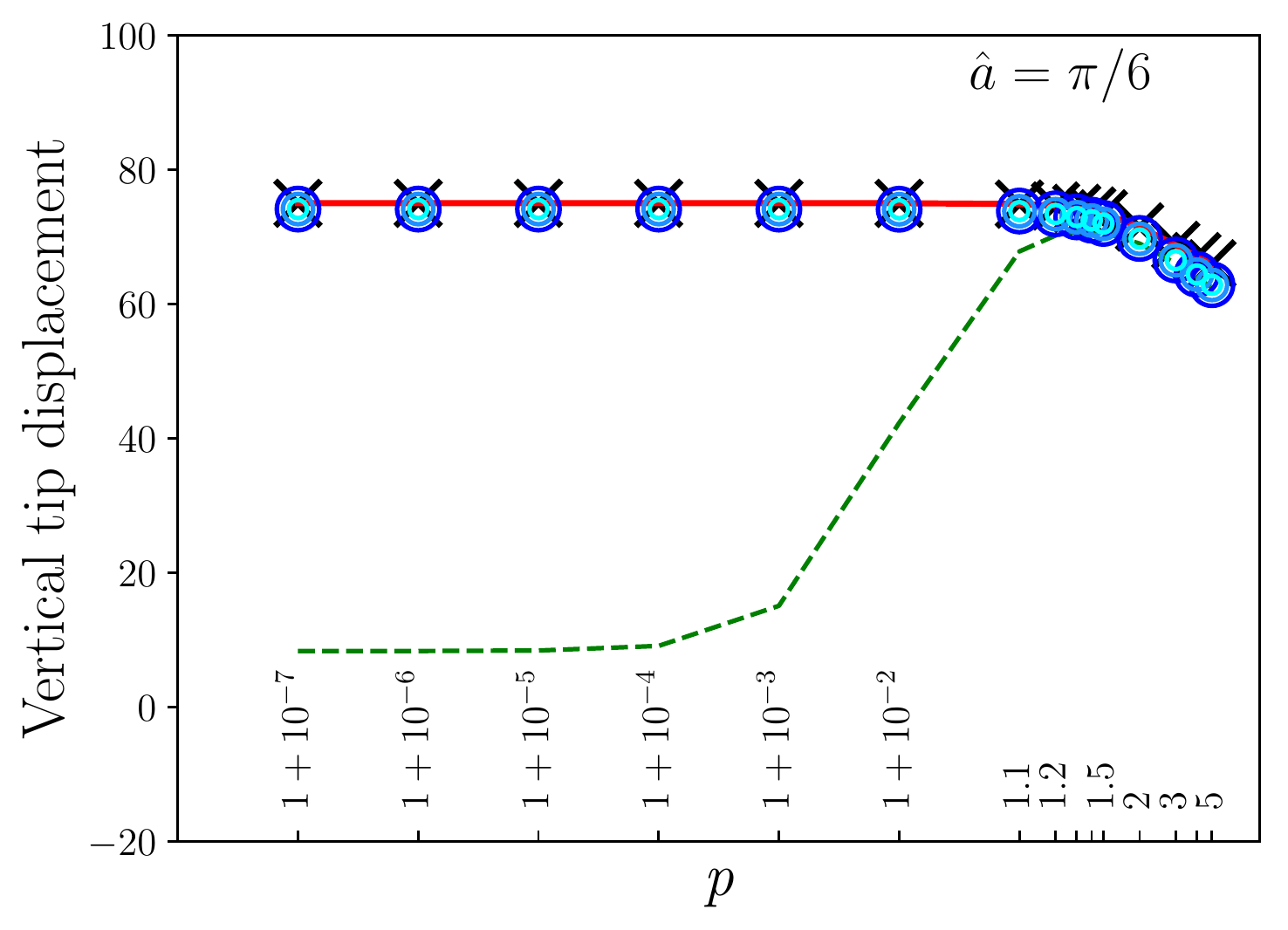}\label{fig:AB_pi6}
								 \includegraphics[width=.57\columnwidth]{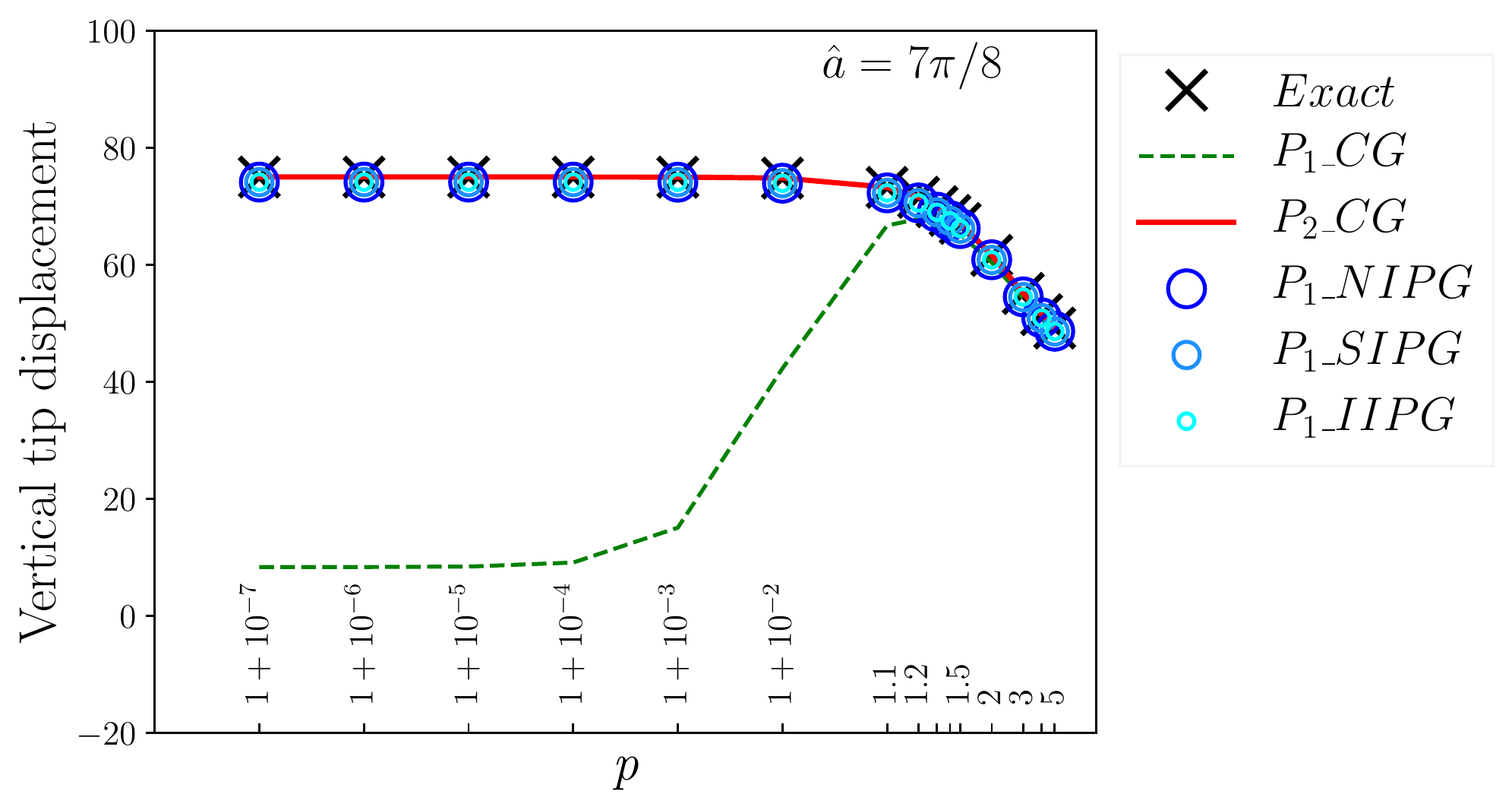}\label{fig:AB_7pi8}}\\
\subfloat[High values of $p$]{\includegraphics[width=.43\columnwidth]{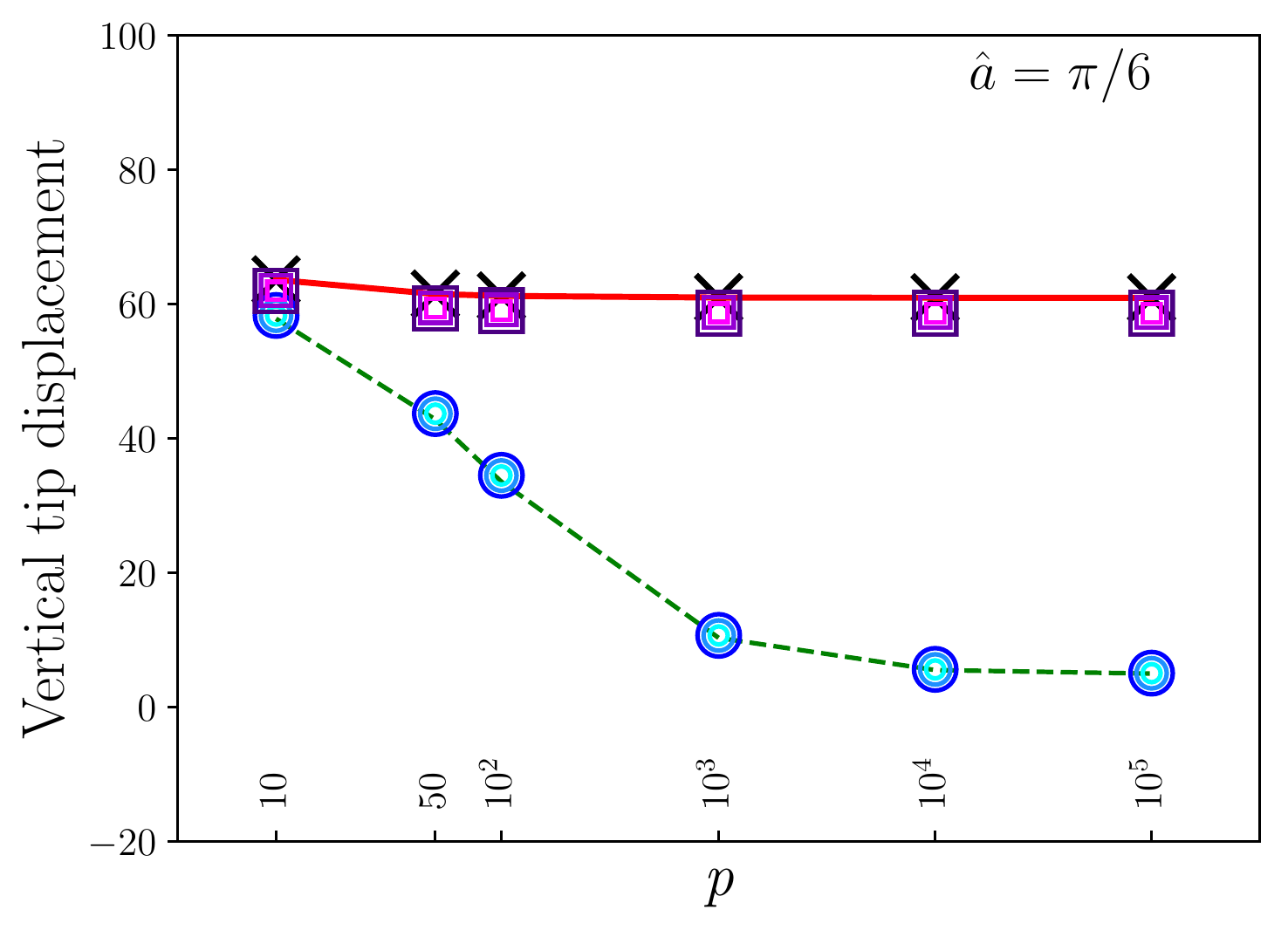}\label{fig:AB_big_pi6}
							 \includegraphics[width=.61\columnwidth]{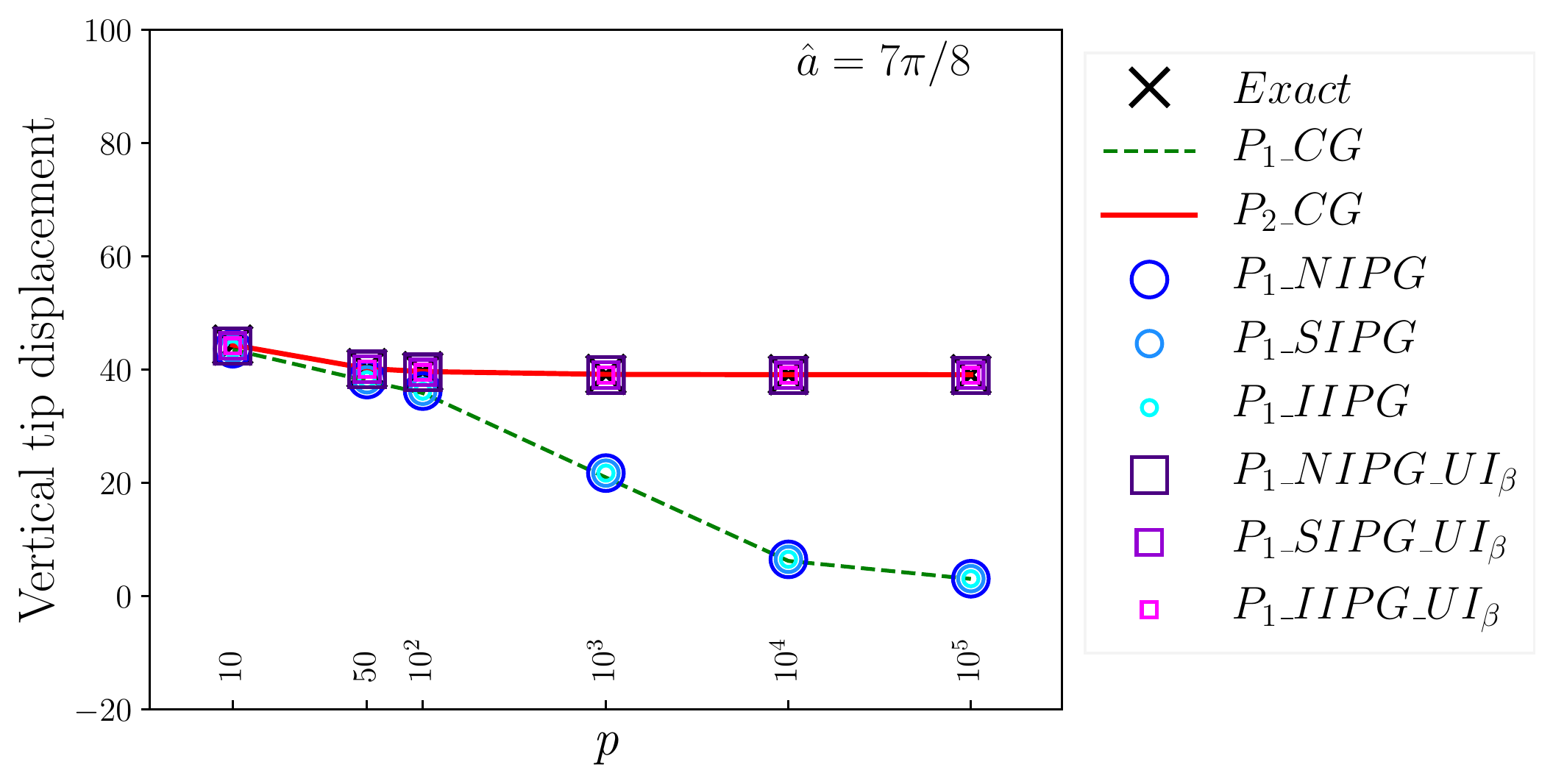}\label{fig:AB_big_7pi8}}
\caption{Tip displacement vs $p$ for the beam problem}
\label{fig:AB_tip_disp_log}
\end{figure}
In Figure \ref{fig:AB_tip_disp_log}, which shows semilog plots of tip displacement for different values of $p$, with the angle of the fibre direction $\pi/6$ and $7\pi/8$, locking behaviour is investigated by comparison with the analytical solution.
The same behaviour as appears for the Cook's example is seen, i.e. for moderate values of $p$ away from $p=1$ (approximately, $1.1 \leq p \leq 5$),
there is locking-free behaviour with $P_1\_CG$, while locking occurs as $p$ approaches $1$. This is overcome by using IPDG methods (Figure \ref{fig:AB_pi6}).
For high values of $p$ there is purely extensional locking with $P_1\_CG$ and all IPDG methods, which is overcome by using under-integration of the $\beta$-stabilization term (Figure \ref{fig:AB_big_pi6}).
\begin{figure}[H]
\centering
\includegraphics[width=0.9\columnwidth]{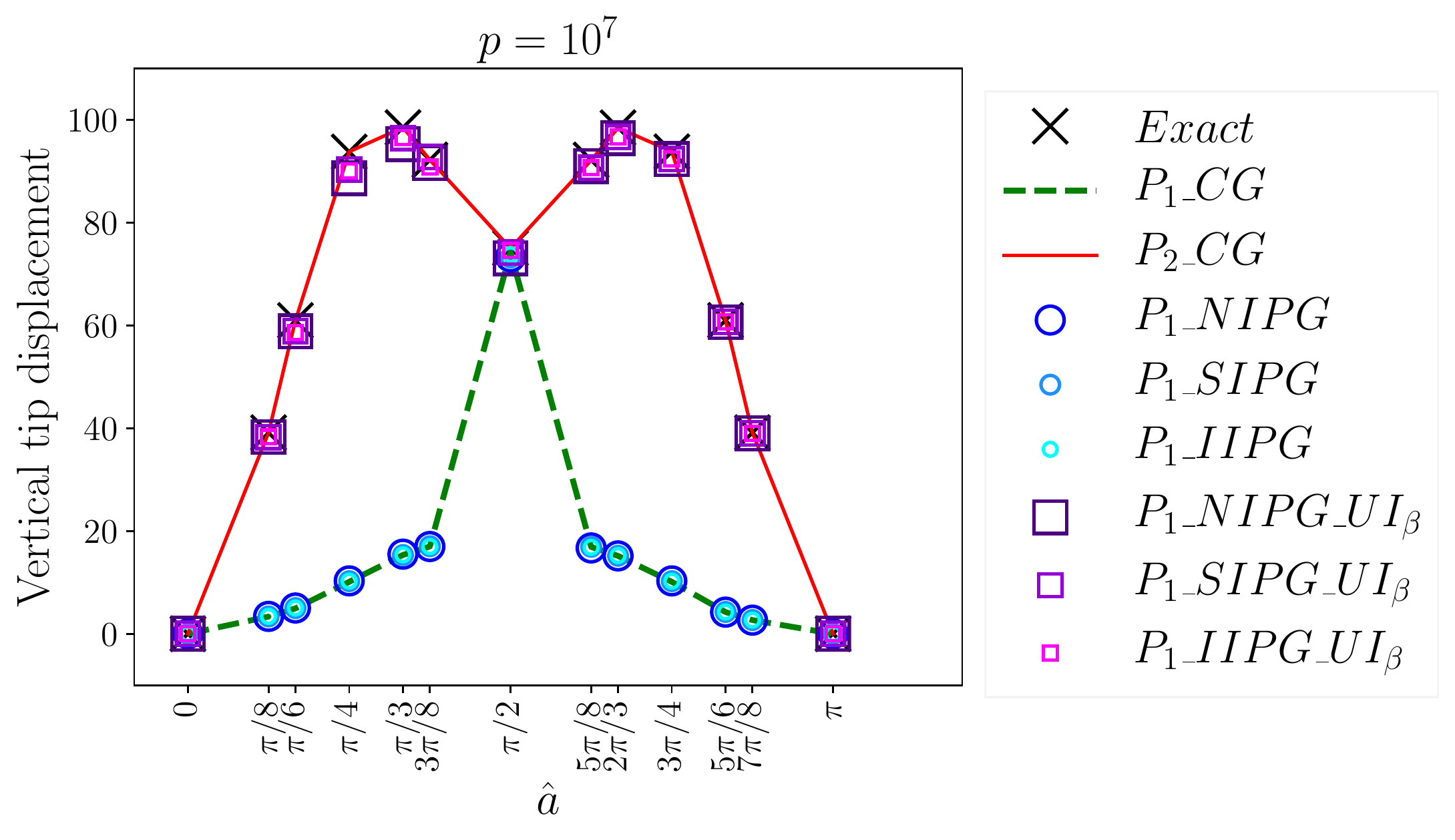}
\caption{Tip displacement for beam problem measured at different fibre orientations, for $p=10^7$}
\label{fig:AB_orientation}
\end{figure}
Figure \ref{fig:AB_orientation} shows tip displacements for various fibre orientations where the degree of anisotropy is fixed at $p=10^7$.
We recall the same behaviour as stated in \cite{RGR}, that is, extensional locking for $P_1\_CG$ except for the angles $0$, where the material is very stiff, and $\pi/2$, where the extensional term tends to $0$.

The following set of results shows behaviour for various fibre orientations, and for values of $p=1.0001, 3$ and $10^4$.

Figure \ref{fig:H1_Error_p1} shows the $\mathcal{H}^1$ relative error convergence plots for all three IPDG formulations and $P_1\_CG$, for $p=1.0001$.
Here all three IPDG formulations show slightly better than optimal (linear) convergence for any fibre direction.
$P_1\_CG$ shows poor convergence, indicative of volumetric locking.
\begin{figure}
\centering
\subfloat[$\hat{a} = \pi/8$]{\includegraphics[width=.52\columnwidth]{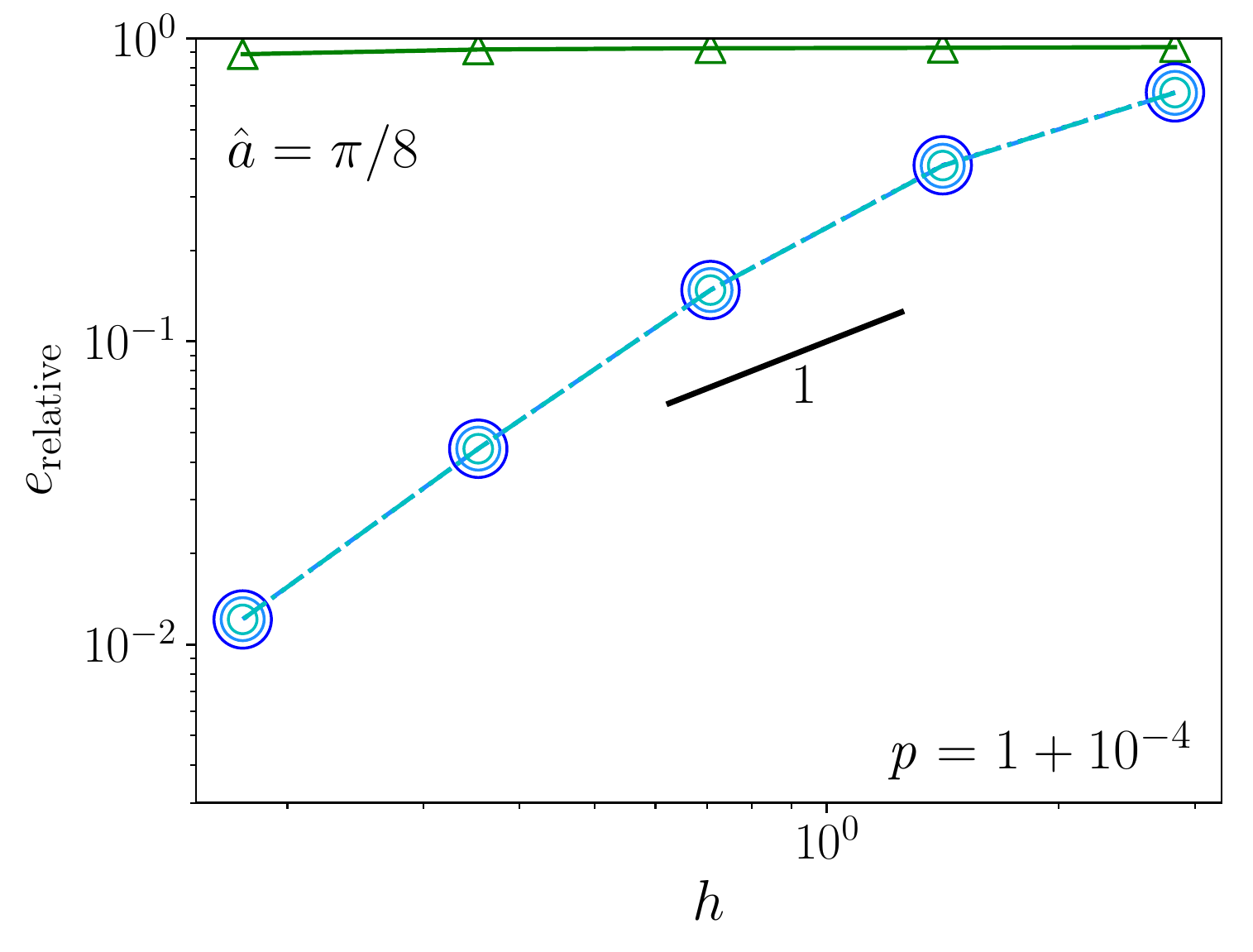}\label{fig:H1_p1_pi8}}
\subfloat[$\hat{a} = \pi/3$]{\includegraphics[width=.52\columnwidth]{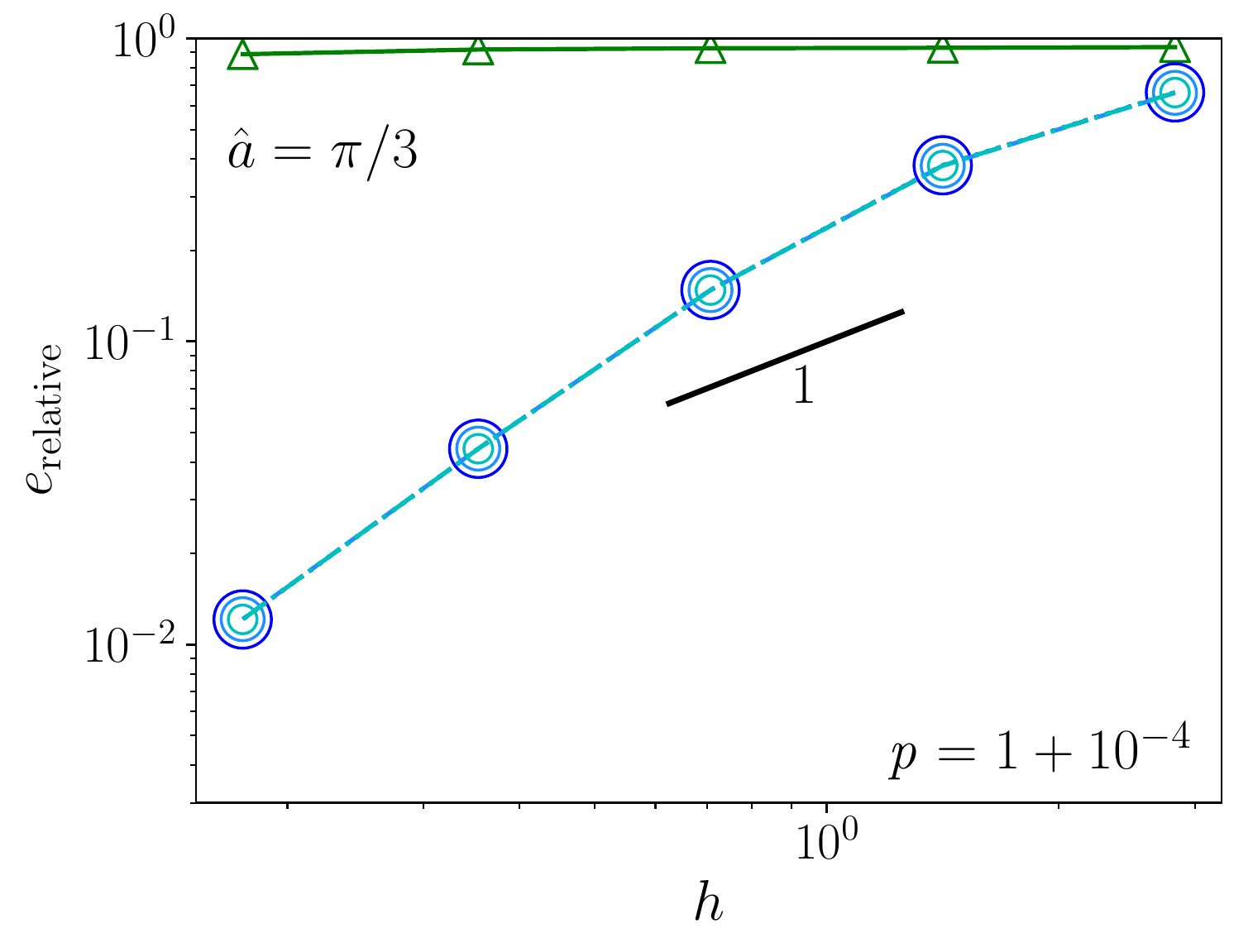}\label{fig:H1_p1_pi3}}\\
\subfloat[$\hat{a} = 5\pi/6$]{\includegraphics[width=.75\columnwidth]{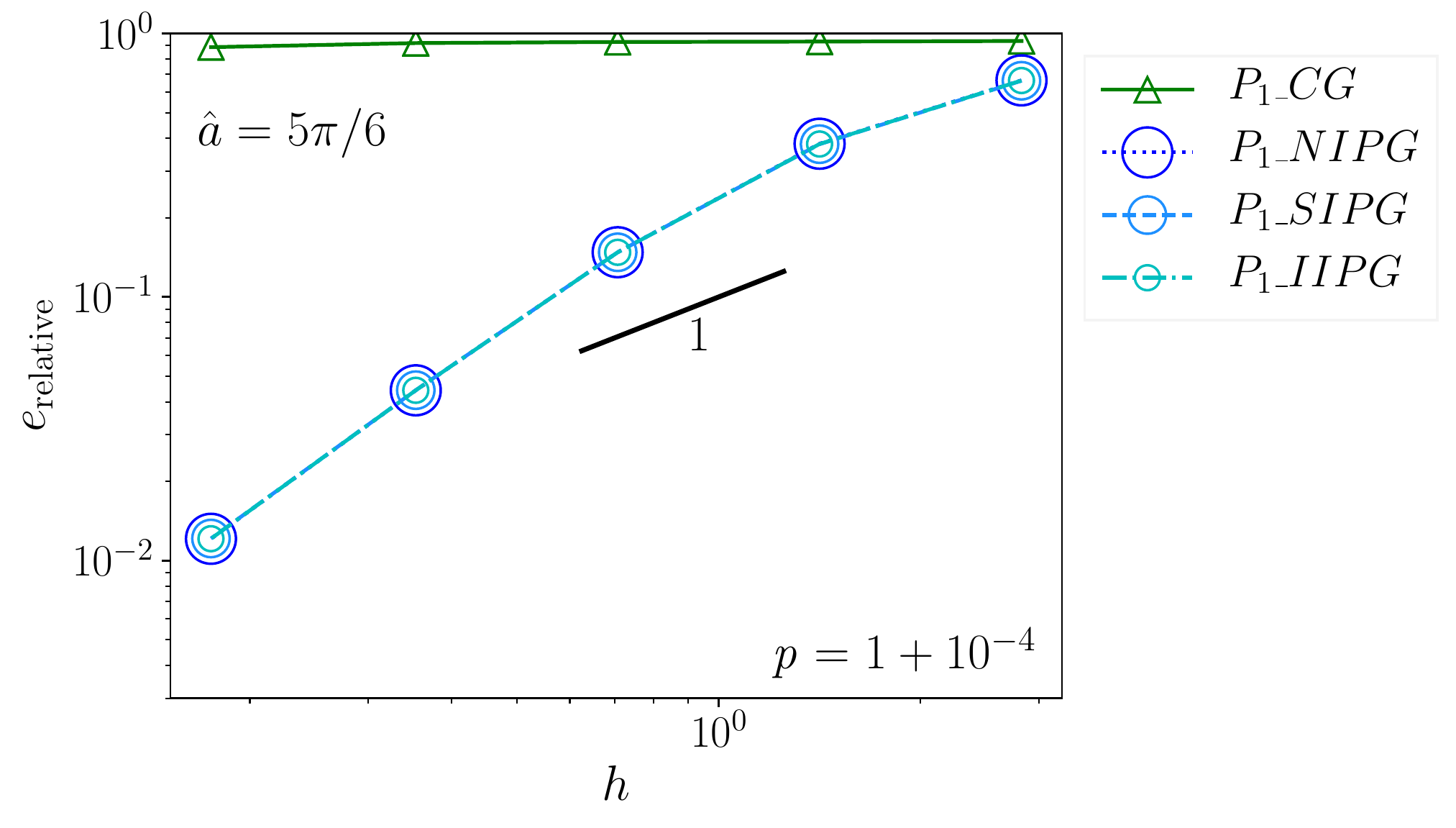}\label{fig:H1_p1_5pi6}}
\caption{Comparison of $\mathcal{H}^1$ relative errors for conforming and full IPDG formulations, for different fibre orientations, and for $p=1.0001$}
\label{fig:H1_Error_p1}
\end{figure}

Figure \ref{fig:H1_Error_p3} shows the $\mathcal{H}^1$ relative error convergence plots for all three IPDG formulations and $P_1\_CG$, for $p=3$.
Here, all formulations at any fibre direction are linearly convergent, indicative of extensional locking.
\begin{figure}
\centering
\subfloat[$\hat{a} = \pi/8$]{\includegraphics[width=.52\columnwidth]{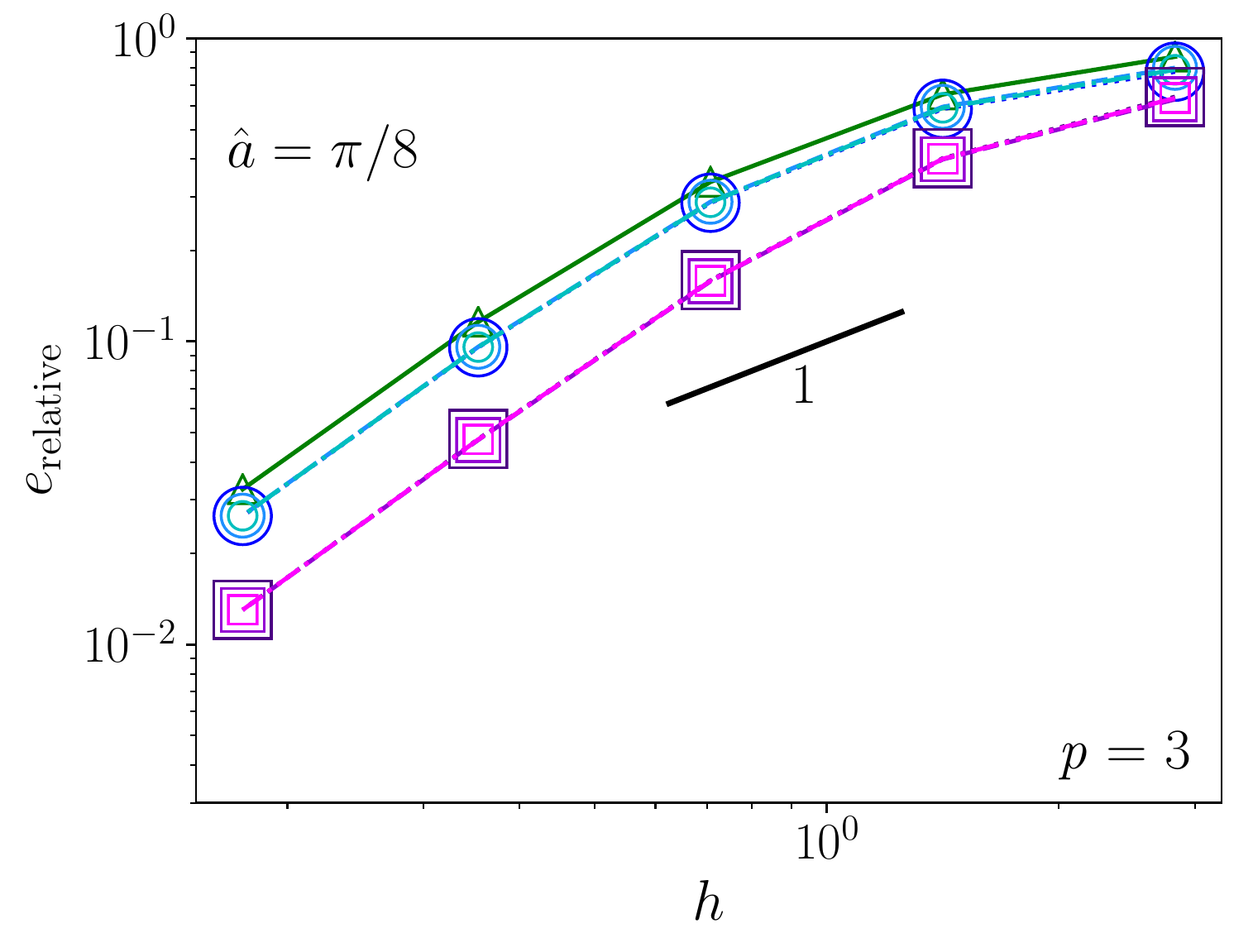}\label{fig:H1_p3_pi8}}
\subfloat[$\hat{a} = \pi/3$]{\includegraphics[width=.52\columnwidth]{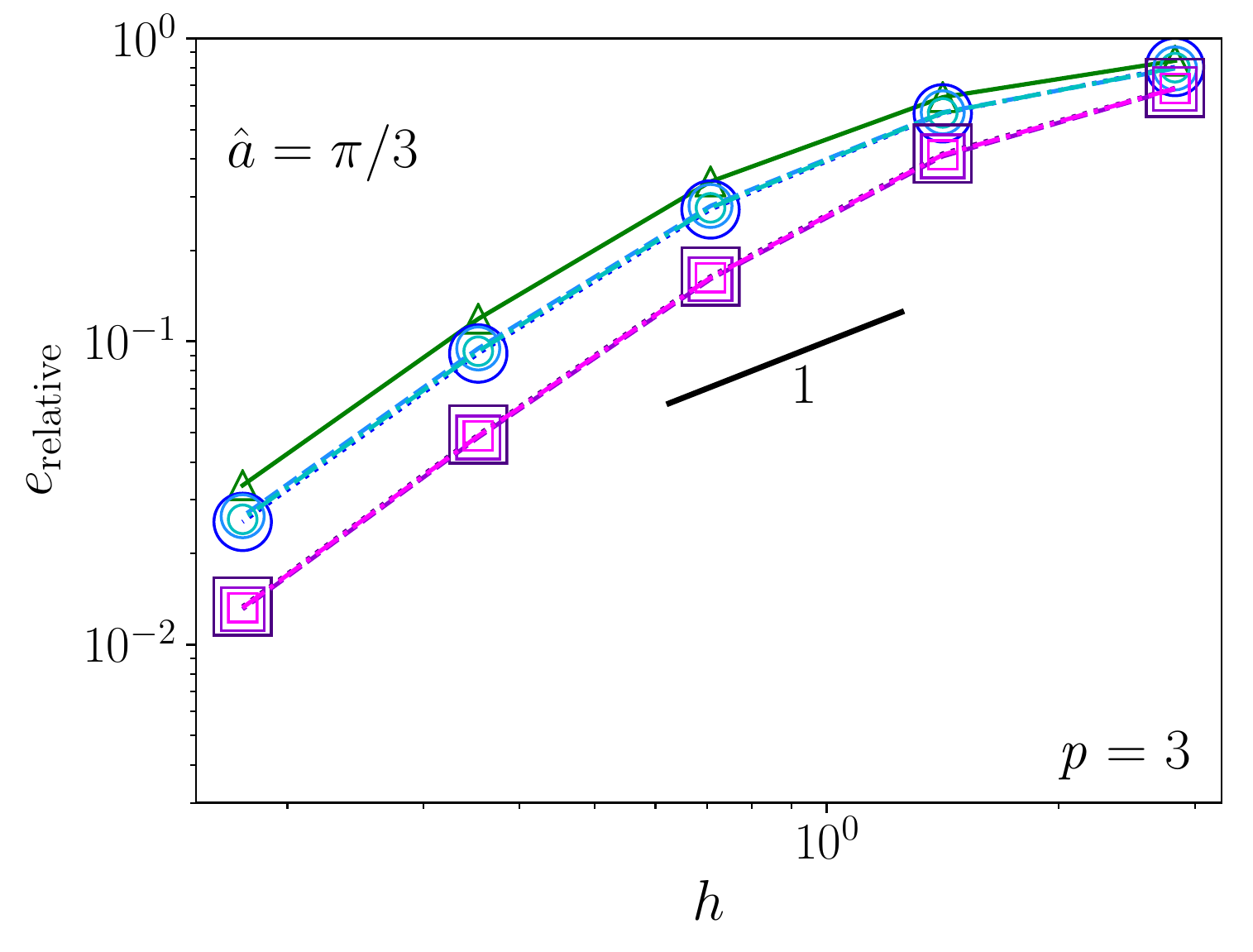}\label{fig:H1_p3_pi3}}\\
\subfloat[$\hat{a} = 5\pi/6$]{\includegraphics[width=.75\columnwidth]{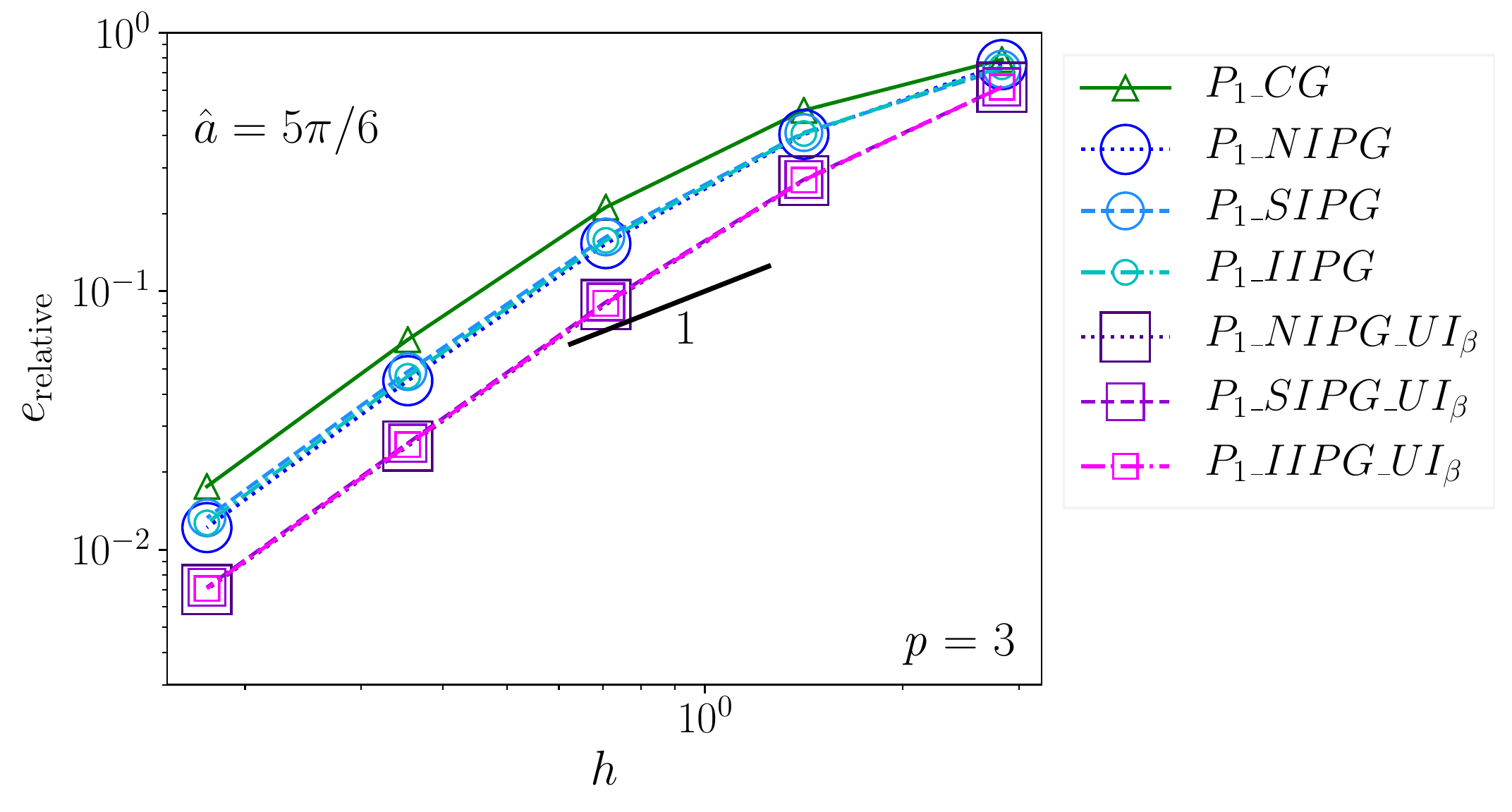}\label{fig:H1_p3_5pi6}}
\caption{Comparison of $\mathcal{H}^1$ relative errors for conforming and full IPDG formulations, for different fibre orientations, and for $p=3$}
\label{fig:H1_Error_p3}
\end{figure}

Figure \ref{fig:H1_Error_p1e4} shows the $\mathcal{H}^1$ relative error convergence plots for all three IPDG formulations and $P_1\_CG$, for $p=10^4$.
$P_1\_CG$ and the full IPDG methods show poor convergence. All three IPDG methods with under-integration of the $\beta$-stabilization term show optimal convergence at rate $1.6$.
\begin{figure}
\centering
\subfloat[$\hat{a} = \pi/8$]{\includegraphics[width=.52\columnwidth]{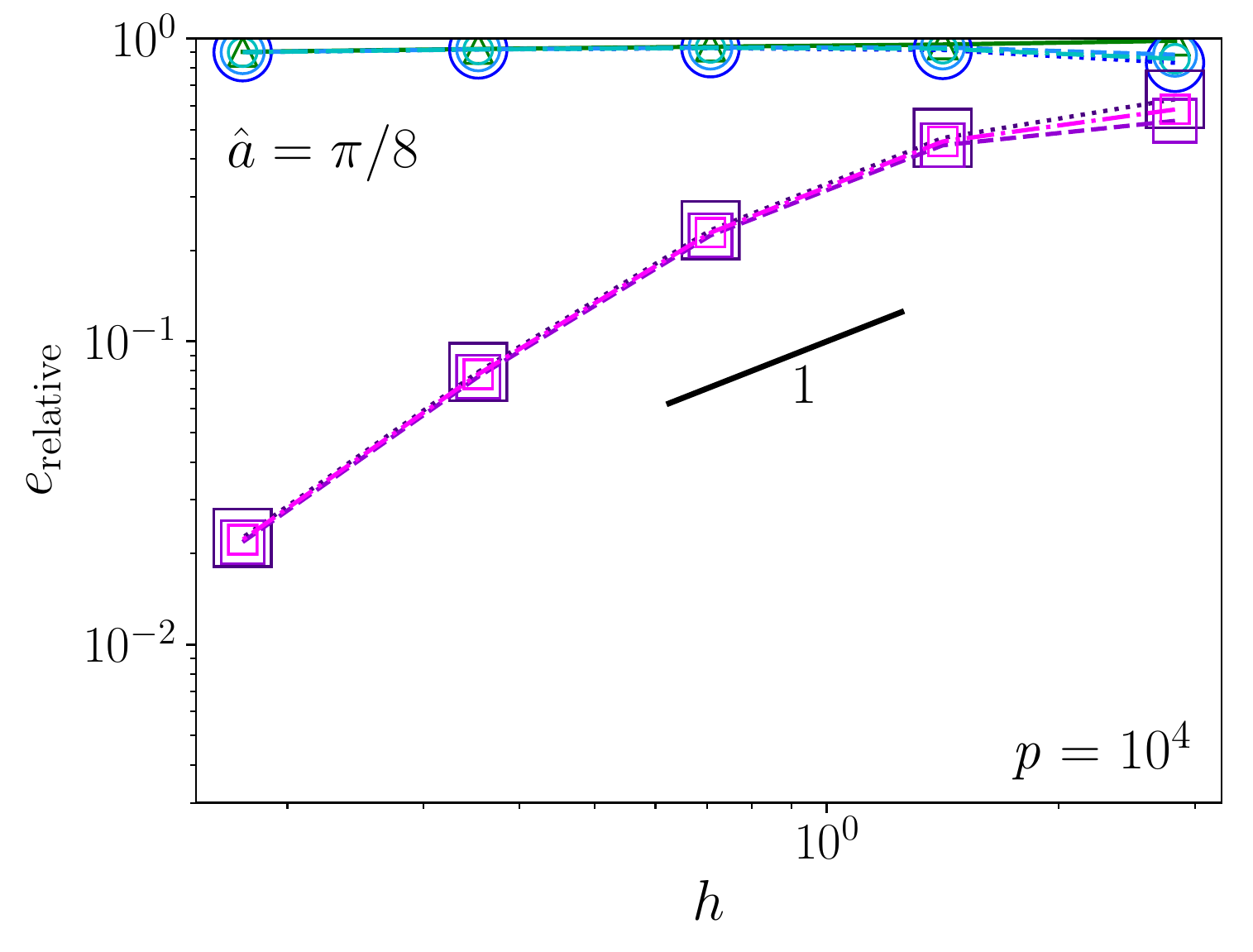}\label{fig:H1_p1e4_pi8}}
\subfloat[$\hat{a} = \pi/3$]{\includegraphics[width=.52\columnwidth]{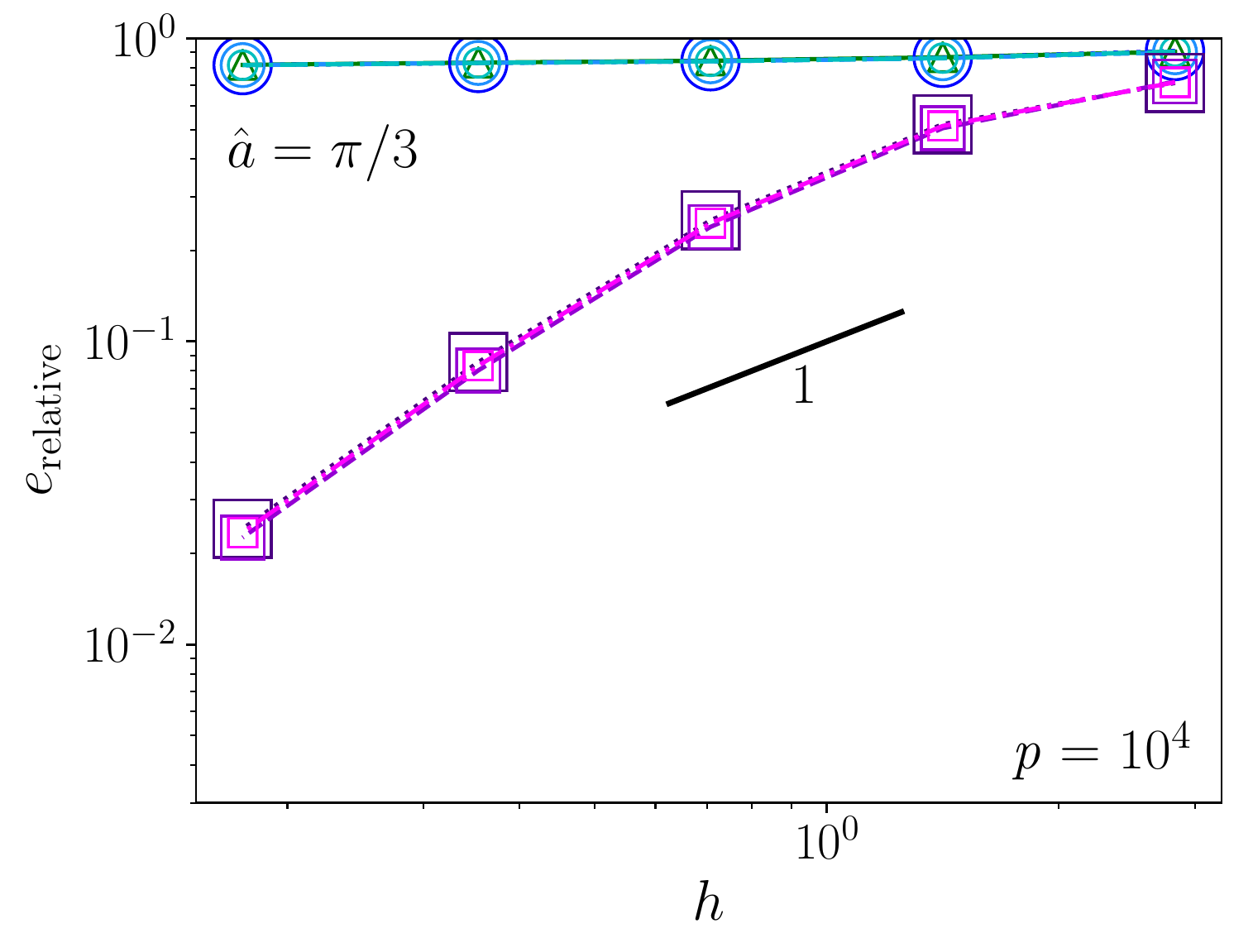}\label{fig:H1_p1e4_pi3}}\\
\subfloat[$\hat{a} = 5\pi/6$]{\includegraphics[width=.75\columnwidth]{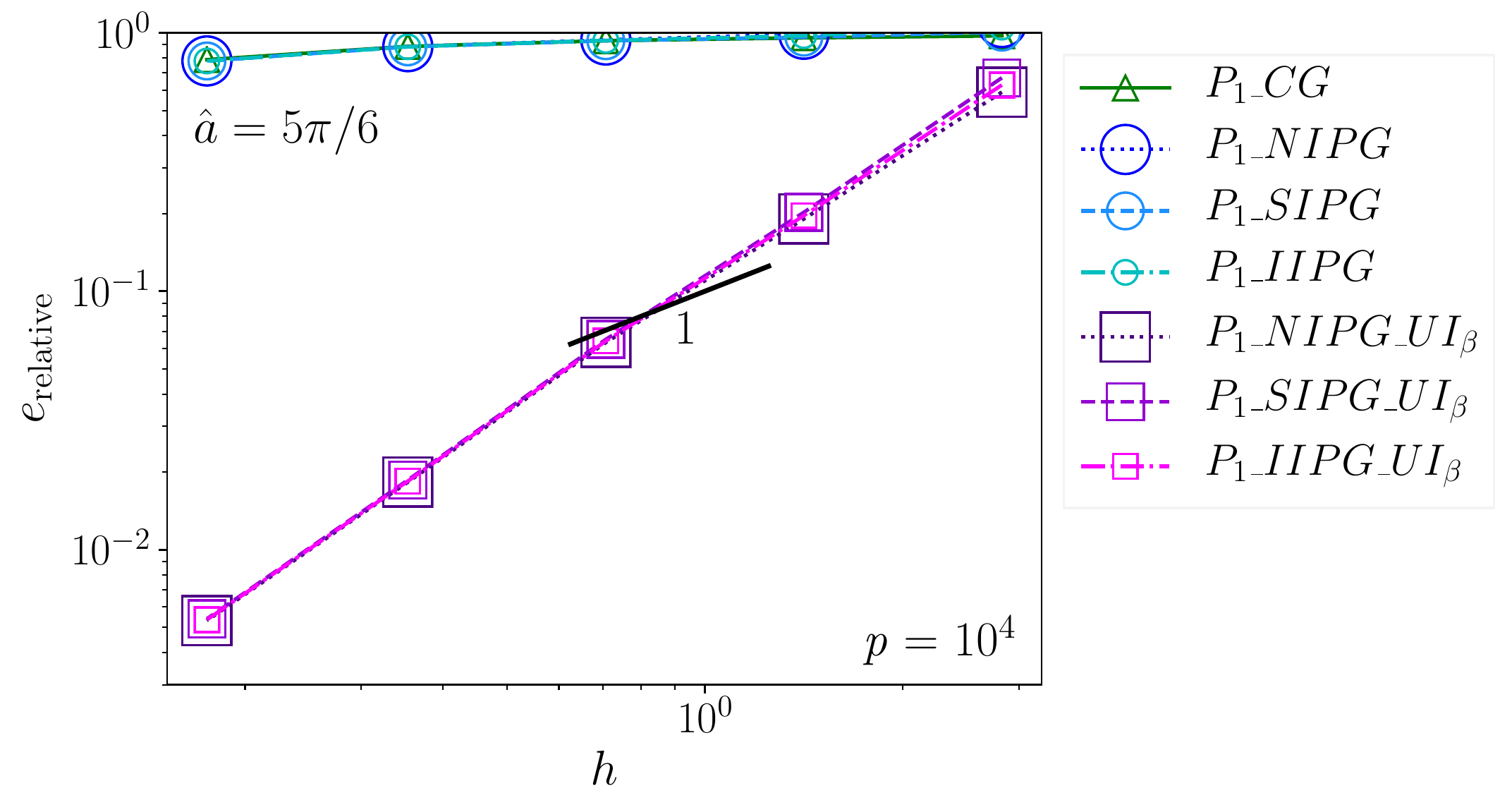}\label{fig:H1_p1e4_5pi6}}
\caption{Comparison of $\mathcal{H}^1$ relative errors for conforming, full IPDG and under-integrated IPDG formulations, for different fibre orientations, and for $p=10^4$}
\label{fig:H1_Error_p1e4}
\end{figure}

\section{Conclusions}
This paper has presented new IPDG formulations for transversely isotropic linear elasticity, and their analyses.
It has shown that these IPDG methods are volumetric locking-free while using low-order triangular meshes.
However, locking has been shown to manifest at the near-inextensible limit: this is suggested by an analytical investigation of the error bound, and confirmed by numerical investigations.
The use of under-integration in the extensional stabilization term of the IPDG formulation is proposed as a remedy.
The analysis presented, assuming an a priori estimate analogous to that of Brenner and Sung for the isotropic case, proves that it is uniformly convergent with respect to the extensibility parameter for low-order triangular elements.
Supporting numerical results over a range of measures of anisotropy and a range of fibre directions show the locking-free behaviours.

A corresponding study using conforming finite element approximations was presented in previous work \cite{RGR}.
This work is presented as an extension to that by using discontinuous Galerkin approaches.
Both works are intended to enhance current understanding of the cases of near-incompressibility and near-inextensibility.
 Further investigations will be directed towards the study of displacement-based formulations for problems posed on non-cartesian coordinate systems, non-homogeneous and large deformation problems, as well as mixed formulations.
%
\section*{Acknowledgements}
The authors are grateful to a referee for comments that have led to an improvement in this work. The authors acknowledge with thanks the support for this work by the National Research Foundation, through the South African Research Chair in Computational Mechanics.
\bibliographystyle{plain}
\bibliography{Main.bib} 


\end{document}